\documentclass[a4paper,11pt,oneside]{article}
\usepackage[latin1]{inputenc}
\usepackage{amssymb, amsmath}
\usepackage{moreverb}
\usepackage{vmargin}
\usepackage{fancyhdr}
\usepackage{amsfonts}
\usepackage{bm}
\usepackage{fancybox}
\usepackage{amsthm}
\usepackage{amsmath}
\newtheorem{theorem}{Theorem}[section]

\newtheorem{lemma}[theorem]{Lemma}

\newtheorem{definition}[theorem]{Definition}

\usepackage[
  linktocpage,
  urlbordercolor={1 0.5 0.125},
  filebordercolor={1 0.5 0.125},
  linkbordercolor={0.25 1 0.25},
  citebordercolor={0.25 1 0.25},
  pdfborderstyle={/S/U/W 1}, 
  breaklinks]{hyperref}  
  
\begin{document}
\ \begin{center}
\begin{LARGE}
\bf {Two-dimensional individual\\ clustering
 model} \\
\end{LARGE}
\vspace{0.5cm}
Elissar Nasreddine\\
\vspace{0.2cm}
 \begin{small}
\textit{Institut de Math\'ematiques de Toulouse, Universit\'e de Toulouse,}\\
 \textit{F--31062 Toulouse cedex 9, France}\\
 \vspace{0.1cm}
 e-mail: elissar.nasreddine@math.univ-toulouse.fr\\
 \today
\end{small}
\end{center}
\textbf{Abstract}: This paper is devoted to study a model of individual clustering with two specific reproduction rates in two space dimensions. Given $q>2$ and an initial condition in $W^{1,q}(\Omega)$, the local existence and uniqueness of solution have been shown in \cite{well}. In this paper we give a detailed proof of existence of global solution.
\section{Introduction}
In the present work, we deal with a model of individual dispersing of individual with an additional aggregation mechanism introduced in \cite{models}. Given a sufficiently smooth function $E$, parameters $\delta\in (0,1)$, $\varepsilon \geq0$ and $r\geq 0$, the equations take the form
\begin{equation}
\label{grin5}
\left\{
\begin{array}{llll}
\displaystyle \partial_t u&=& \delta\ \Delta u-\nabla\cdot(u\ \bm{\omega})+r\ u\ E(u),& x\in \Omega, t>0 \\
\displaystyle -\varepsilon\ \Delta \bm{\omega}+\bm{\omega}&=& \nabla E(u),& x\in \Omega, t>0 \\
\end{array}
\right.
\end{equation}
in an open bounded domain $\Omega\subset \mathbb{R}^2$, where  $u(t,x)>0$, $\bm{\omega}(t,x)\in \mathbb{R}^2$ and $E$ denote the population density, the average velocity of dispersing individuals, and the individual net reproduction rate, respectively. In this model, the individuals are assumed to disperse randomly in space ($\delta\ \Delta u$) with a bias $-\nabla\cdot (u\ \bm{\omega})$ in the direction of increasing reproduction rate, the term  $\varepsilon \ \Delta \bm{\omega}$ acting as a mollifier to smooth out any sharp local variation in $\nabla E(u)$.\\

In \cite{models}, we supplement \eqref{grin5} with no-flux boundary conditions
\begin{equation}
\label{in1.5}
\partial_{\bm{n}} u=\bm{n}\cdot \bm{\omega}=0, \ \ x\in\partial \Omega, \ t\geq 0,
\end{equation}
where $\bm{n}$ is the outward unit normal of $\partial \Omega$ and $\partial_{\bm{n}} u=\bm{n}\cdot \nabla u$. However, to guarantee the well-posedness of the elliptic system for $\bm{\omega}$ in two space dimensions, we should append the following condition given in \cite{asimplified, unprobleme, quelques}
\begin{equation}
\label{in2.5}
\partial_{\bm{n}} \bm{\omega}\times \bm{n}=0, \ \ x\in \partial \Omega, \ t\geq0,
\end{equation}
where $\partial_{\bm{n}} \bm{\omega}= (\partial_{\bm{n}} \omega_1 , \partial_{\bm{n}} \omega_2) = (\bm{n}\cdot \nabla\omega_1, \bm{n}\cdot\nabla\omega_2)$ for the vector field $\bm{\omega}=(\omega_1,\omega_2)$. in other words, \eqref{in2.5} means that $\partial_{\bm{n}} \bm{\omega} $ is parallel to $\bm{n}$, where $\bm{v}\times \bm{u}=v_1\ u_2 - u_1\ v_2 $.\\

Given $q>2$, and an initial condition $u_0\in W^{1,q}(\Omega)$, the existence and uniqueness of a nonnegative and maximal solution of \eqref{grin5}, \eqref{in1.5} and \eqref{in2.5} have been shown in \cite{well}, and the purpose of this paper is to prove the global existence of solution when $E(u)$ has the two specific forms suggested in \cite{models},  namely
\begin{equation}
\label{in3.5}
E(u)=(1-u)\ (u-a)
\end{equation} 
for some $a\in (0,1)$, or 
\begin{equation}
\label{in4.5}
E(u)=1-u.
\end{equation}
For these choices of reproduction rates, global existence has been shown in \cite{well} in one space dimension and the purpose of this work is to prove that the solutions are global as well in two space dimensions. As in the one-dimensional case, the starting point of the analysis is an $L^\infty(L^2)$ estimate on $u$ and an $L^2$ estimate on $\nabla \cdot \bm{\omega}$. Combining the latter with Gagliardo-Nirenberg inequality gives $L^\infty(L^p)$ estimates on $u$ for any $p>2$. This then allow us to obtain an $L^\infty(L^2)$ bound on $\nabla u$ which in turn gives an $L^\infty$ bound on $\bm{\omega}$ by elliptic regularity.\\

The paper is organized as follows. In section 2, we state the global existence results, and focus on the two
specific forms of $E$: The ``bistable case" \eqref{in3.5} see Theorem \ref{th1.5}, and the ``monostable case" \eqref{in4.5}, see Theorem \ref{th2.5}. In section 3, we recall the local existence result obtained in \cite{well} and we give some properties of the elliptic system for $\bm{\omega}$. In section 4, we turn to the global existence issue in the bistable case. The proof starts from the $L^\infty(L^2)$ estimate for $u$, an $L^2$ estimate for $\bm{\omega}$ and an $L^2$ estimate on $\nabla \cdot \bm{\omega}$ obtained from a suitable cancellation between the coupling terms in the $u$ and $\bm{\omega}$ equations, then, for $p>2$, we derive an $L^\infty(L^p)$ estimate for $u$. Then we use Lemma A.1 of \cite{Boundedness} to derive an $L^\infty$ estimate of $u$ and we end the proof by an $L^\infty(L^q)$ estimate on $\nabla u$. This ensures global existence. In section 5, we prove the global existence in the monostable case. The proof is quite similar to that of the previous case, except for the first estimate.
\section{Main result}
We first define the notion of solution to \eqref{grin5}-\eqref{in2.5} to be used in this paper.
\begin{definition}\label{def1.5}
Let $T>0$, $q> 2$,  and an initial condition $u_0\in W^{1, q}(\Omega)$ . A strong solution of \eqref{grin5}-\eqref{in2.5} on $[0, T)$ is a function
$$u \in C \left( [0,T), W^{1,q}(\Omega)\right)\cap C\left( (0,T), W^{2,q}(\Omega)\right),$$ such that
\begin{equation}
\label{eqdeu5}
\left\{
\begin{array}{llll}
\displaystyle\partial_t u&=& \delta\ \Delta u-\nabla\cdot(u\ \bm{\omega})+r\ u\ E(u),& \mathrm{a.e.\ in }\ [0,T)\times\Omega\\
\displaystyle u(0,x)&=&u_0(x),& \mathrm{a.e.\ in }\ \Omega\\
\displaystyle \partial_{\bm{n}}u&=&0,& \mathrm{a.e.\ on }\ [0,T)\times\partial\Omega,
\end{array}
\right.
\end{equation}
where, for all $t\in [0,T)$, $\bm{\omega}(t)$ is the unique solution in $W^{2,q}(\Omega)$ of
\begin{equation}
\label{eqdefi5}
\left\{
\begin{array}{llll}
\displaystyle-\varepsilon\Delta \bm{\omega}(t)+\bm{\omega}(t)&=& \nabla E(u(t))& \mathrm{a.e.\ in}\ \Omega\\
\displaystyle\bm{\omega}(t)\cdot \bm{n}=\partial_{\bm{n}}\bm{\omega}(t) \times \bm{n}&=&0& \mathrm{a.e.\ on}\ \partial \Omega
\end{array}
\right.
\end{equation}
\end{definition}

In the following theorem we give the global existence of solution to \eqref{grin5}-\eqref{in2.5} in the bistable case, that is when $E(u)=(1-u)(u-a)$, for some $a\in (0,1)$.
\begin{theorem} \label{th1.5}Let $q>2$, and assume that $u_0$ is a nonnegative function in $W^{1,q}(\Omega)$,\\ and $E(u)=(1-u)(u-a)$ for some $a\in (0,1)$. Then \eqref{grin5}-\eqref{in2.5} has a global nonnegative solution $u$ in the sense of Definition \ref{def1.5}.
\end{theorem}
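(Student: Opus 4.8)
\section*{Proof proposal}

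The plan is to combine the local well-posedness of \cite{well} with uniform-in-time a priori estimates. The local theory furnishes a nonnegative maximal solution on some interval $[0,T_{\max})$, together with a blow-up criterion: if $T_{\max}<\infty$, then $\|u(t)\|_{W^{1,q}(\Omega)}$ must become unbounded as $t\uparrow T_{\max}$. Hence it suffices to show that on every finite interval $[0,T]$ the $W^{1,q}$-norm of $u$ stays bounded by a constant depending only on $T$ and the data; nonnegativity is inherited from the local solution. I would build the required control through the chain announced in the introduction: an $L^\infty(L^2)$ bound on $u$ coupled with $L^2$ control of $\bm{\omega}$ and of $\nabla\cdot\bm{\omega}$, then $L^\infty(L^p)$ for every finite $p$, then $L^\infty$, and finally $L^\infty(L^q)$ on $\nabla u$.

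The cornerstone, and the step I expect to be the main obstacle, is the first estimate, where the destabilising aggregation term $\nabla\cdot(u\,\bm{\omega})$ must be tamed. First I would test the $u$-equation with $u$; after integration by parts the no-flux conditions $\partial_{\bm{n}}u=0$ and $\bm{\omega}\cdot\bm{n}=0$ annihilate every boundary contribution and turn the transport term into $\int_\Omega u\,\nabla u\cdot\bm{\omega}\,dx$. Testing $-\varepsilon\Delta\bm{\omega}+\bm{\omega}=\nabla E(u)$ with $\bm{\omega}$ gives $\varepsilon\|\nabla\bm{\omega}\|_{L^2}^2+\|\bm{\omega}\|_{L^2}^2=\int_\Omega E'(u)\,\nabla u\cdot\bm{\omega}\,dx$, where the boundary term $\int_{\partial\Omega}\partial_{\bm{n}}\bm{\omega}\cdot\bm{\omega}$ vanishes precisely because \eqref{in2.5} forces $\partial_{\bm{n}}\bm{\omega}\parallel\bm{n}$ while $\bm{\omega}\cdot\bm{n}=0$, which is exactly why that boundary condition was imposed. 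The decisive observation is that $E$ is chosen so that $E'(u)=(1+a)-2u$ is affine: the identity $u=\tfrac{1+a}{2}-\tfrac12 E'(u)$ lets me substitute the $\bm{\omega}$-identity and convert the dangerous quadratic transport term into the dissipative quantities $-\tfrac12\big(\varepsilon\|\nabla\bm{\omega}\|_{L^2}^2+\|\bm{\omega}\|_{L^2}^2\big)$ plus a harmless linear remainder $\tfrac{1+a}{2}\int_\Omega\nabla u\cdot\bm{\omega}\,dx$. This is the cancellation between the two coupling terms. Moreover the reaction is genuinely dissipative for this $E$: since $u^2E(u)=-u^4+(1+a)u^3-au^2$, Young's inequality yields $r\int_\Omega u^2E(u)\,dx\le -\tfrac{r}{2}\|u\|_{L^4}^4+C$. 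Absorbing the linear remainder into $\delta\|\nabla u\|_{L^2}^2$ and $\|\bm{\omega}\|_{L^2}^2$ (using the quartic gain where the constants are tight) and integrating in time, I obtain a uniform $L^\infty(0,T;L^2)$ bound on $u$ and an $L^2(0,T;L^2)$ bound on $\nabla\bm{\omega}$, hence on $w:=\nabla\cdot\bm{\omega}$. The delicate point is balancing constants in two dimensions, where, unlike in 1D, no absolute Sobolev constant rescues the transport term; it is the quartic dissipation of the bistable reaction that supplies the missing room.

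With $w\in L^2(0,T;L^2)$ in hand I would run the $L^p$ bootstrap. Testing with $u^{p-1}$ and setting $v=u^{p/2}$, the transport term becomes $-\tfrac{p-1}{p}\int_\Omega u^p\,w\,dx$, bounded by $\tfrac{p-1}{p}\|v\|_{L^4}^2\|w\|_{L^2}$; the two-dimensional Gagliardo--Nirenberg inequality $\|v\|_{L^4}^2\le C\|\nabla v\|_{L^2}\|v\|_{L^2}+C\|v\|_{L^2}^2$ then lets me absorb $\|\nabla v\|_{L^2}^2$ into the diffusion and reach $\frac{d}{dt}\|v\|_{L^2}^2\le C\big(1+\|w\|_{L^2}^2\big)\|v\|_{L^2}^2+C$. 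Since $w\in L^2(0,T;L^2)$, Gr\"onwall's lemma gives a uniform bound on $\|u(t)\|_{L^p}$ for every finite $p$, which I would upgrade to an $L^\infty(0,T;L^\infty)$ bound via Lemma A.1 of \cite{Boundedness}. Once $u$ is bounded, elliptic regularity for the $\bm{\omega}$-system (from the properties recalled in Section~3, together with $W^{2,p}\hookrightarrow C^1(\overline{\Omega})$ for $p>2$ in 2D) yields $\bm{\omega}\in L^\infty$, and treating the $u$-equation as a linear parabolic problem with the smoothing estimates of the Neumann heat semigroup produces the uniform $L^\infty(0,T;L^q)$ bound on $\nabla u$. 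This controls $\|u(t)\|_{W^{1,q}}$ on $[0,T]$, contradicts the blow-up alternative, and forces $T_{\max}=\infty$, giving the global nonnegative solution. The steps after the cancellation are essentially routine parabolic bootstrapping; the whole difficulty is concentrated in the first estimate.
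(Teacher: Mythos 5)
Your overall architecture is the paper's: local theory plus blow-up criterion, the coupling cancellation for the basic estimate, a Gagliardo--Nirenberg $L^p$ bootstrap, Lemma A.1 of \cite{Boundedness} for $\|u\|_\infty$, elliptic regularity for $\bm{\omega}$, and semigroup smoothing with the singular Gronwall lemma for $\|\nabla u\|_q$. But there is a genuine gap exactly at the step you yourself identify as the crux. After the cancellation you are left with the linear remainder $\tfrac{1+a}{2}\int_\Omega \bm{\omega}\cdot\nabla u\,dx$, and you propose to absorb it into $\delta\|\nabla u\|_2^2$ and $\|\bm{\omega}\|_2^2$, with the quartic reaction gain covering ``tight constants''. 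This does not close for the stated parameter range. Young's inequality gives $\tfrac{1+a}{2}\left|\int_\Omega\bm{\omega}\cdot\nabla u\,dx\right|\le \eta\|\nabla u\|_2^2+\tfrac{(1+a)^2}{16\eta}\|\bm{\omega}\|_2^2$, and absorption requires simultaneously $\eta\le\delta$ and $\tfrac{(1+a)^2}{16\eta}\le\tfrac12$, i.e. $\delta\ge (1+a)^2/8$, which fails for small $\delta\in(0,1)$. The quartic dissipation cannot rescue this: the deficit is a $\|\nabla u\|_2^2$ (or $\|\bm{\omega}\|_2^2$) term that $\|u\|_4^4$ does not control, and, worse, the paper allows $r\ge 0$, so for $r=0$ there is no quartic gain at all; the theorem imposes no lower bound on $r$ or $\delta$. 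Your diagnosis that ``the quartic dissipation of the bistable reaction supplies the missing room'' is therefore wrong: the paper's Lemma \ref{le1.5} uses only the trivial bound $u^2E(u)\le 0$ for $u\notin(a,1)$, hence $\int_\Omega u^2E(u)\,dx\le(1-a)|\Omega|$.

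The correct repair is one you already have the tools for: integrate the remainder by parts using $\bm{\omega}\cdot\bm{n}=0$, so that $\tfrac{1+a}{2}\int_\Omega\bm{\omega}\cdot\nabla u\,dx=-\tfrac{1+a}{2}\int_\Omega u\,\nabla\cdot\bm{\omega}\,dx\le \tfrac{\varepsilon}{4}\|\nabla\cdot\bm{\omega}\|_2^2+\tfrac{(1+a)^2}{4\varepsilon}\|u\|_2^2$; the first piece is absorbed by the $\varepsilon$-dissipation produced when testing the $\bm{\omega}$-equation, and the $\|u\|_2^2$ piece is handled by Gronwall, since exponential growth is harmless on finite horizons --- this is exactly the paper's proof, valid for all $\delta\in(0,1)$, $\varepsilon>0$, $r\ge0$. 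A secondary, patchable, imprecision: you invoke Lemma A.1 of \cite{Boundedness} before establishing any pointwise-in-time bound on $\bm{\omega}$, whereas its hypotheses require $f=-u\bm{\omega}$ bounded in $L^{q_1}$ for some $q_1>2$; the paper therefore proves $\|\nabla u(t)\|_2\le C(T)$ (Lemma \ref{le3.5}) and $\|\bm{\omega}(t)\|_\infty\le C(T)$ (Lemma \ref{le4.5}) first. Your ordering can be saved by noting the elliptic energy identity gives $\|\bm{\omega}(t)\|_{W^{1,2}}\le C(\varepsilon)\|E(u(t))\|_2\le C(T)$ once $u\in L^\infty(0,T;L^4)$, whence $\bm{\omega}(t)$ is bounded in every $L^{q_1}$, $q_1<\infty$; but as written that hypothesis is unverified.
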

The proof starts with a suitable cancellation of the coupling terms in the two equations which gives an estimate for $u$ in $L^\infty (L^2)$ and for $\nabla \cdot \bm{\omega}$ in $L^2$. Using the Gagliardo-Nirenberg inequality \eqref{gag5} we derive, for $p>2$ an $L^\infty(L^p)$ estimate for $u$. Then by the regularity properties of the second equation in \eqref{grin5} we obtain an $L^\infty$ bound of $\bm{\omega}$. Combining these estimates and Lemma A.1 of \cite{Boundedness} provide us with an $L^\infty$ estimate for $u$ which is used to show an $L^\infty(L^q)$ estimate for $\nabla u$. This proves that the solution cannot explode in finite time.\\

Next, we turn to the global existence issue in the monostable case, that is  when $E(u)=1-u$.
\begin{theorem} \label{th2.5}Let $q>2$, and assume that $u_0$ is a nonnegative function in $W^{1,q}(\Omega)$,\\ and $E(u)=(1-u)$. Then \eqref{grin5}-\eqref{in2.5} has a global nonnegative solution $u$ in the sense of Definition \ref{def1.5}.
\end{theorem}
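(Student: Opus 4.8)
The plan is to follow exactly the scheme used for the bistable case in Theorem \ref{th1.5}: produce an $L^\infty(L^2)$ bound on $u$ together with $L^2$ bounds on $\bm{\omega}$ and $\nabla\cdot\bm{\omega}$, upgrade this to an $L^\infty(L^p)$ bound for every $p>2$ by Gagliardo--Nirenberg, deduce an $L^\infty$ bound on $\bm{\omega}$ by elliptic regularity, feed these into Lemma~A.1 of \cite{Boundedness} to obtain an $L^\infty$ bound on $u$, and finally close with an $L^\infty(L^q)$ bound on $\nabla u$, which rules out finite-time blow-up in $W^{1,q}(\Omega)$ and hence gives global existence. Since every step after the first a priori estimate uses only the boundedness of the various norms, and not the precise algebraic form of $E$, it carries over verbatim from the bistable case. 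The only place where the monostable nonlinearity $E(u)=1-u$ must be treated differently is the initial $L^\infty(L^2)$ estimate: after testing against $u$, the reaction term $r\,u\,E(u)=r\,u(1-u)$ supplies only the cubic dissipation $-r\int_\Omega u^3\,dx$, whereas the bistable reaction supplies the stronger quartic dissipation $-r\int_\Omega u^4\,dx$ that absorbs the drift contribution. A new argument is therefore needed to control the coupling term.

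For this first estimate I would multiply the first equation in \eqref{grin5} by $u$ and integrate over $\Omega$; using $\partial_{\bm{n}}u=0$ and $\bm{\omega}\cdot\bm{n}=0$, the coupling term becomes $-\tfrac12\int_\Omega u^2\,\nabla\cdot\bm{\omega}\,dx$, so that
\begin{equation*}
\frac12\frac{d}{dt}\|u\|_{L^2}^2+\delta\|\nabla u\|_{L^2}^2=-\frac12\int_\Omega u^2\,\nabla\cdot\bm{\omega}\,dx+r\int_\Omega u^2(1-u)\,dx.
\end{equation*}
The key observation is that, for $E(u)=1-u$, one has $\nabla E(u)=-\nabla u$, so that taking the divergence of the second equation in \eqref{grin5} and invoking the properties of the elliptic system recalled in Section~3 shows that $w:=\nabla\cdot\bm{\omega}$ solves the scalar Neumann problem $-\varepsilon\Delta w+w=-\Delta u$. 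Writing $v$ for the solution of $-\varepsilon\Delta v+v=u$ with $\partial_{\bm{n}}v=0$, the resolvent identity gives $w=\varepsilon^{-1}(u-v)$, and $v$ is nonnegative and satisfies $\|v\|_{L^p}\le\|u\|_{L^p}$ for every $p\in[1,\infty]$ because $(I-\varepsilon\Delta)^{-1}$ is a positivity preserving $L^p$-contraction. Hölder's inequality then yields $\int_\Omega u^2 v\,dx\le\|u\|_{L^3}^2\|v\|_{L^3}\le\|u\|_{L^3}^3=\int_\Omega u^3\,dx$, whence the coupling term $-\tfrac1{2\varepsilon}\int_\Omega u^2(u-v)\,dx$ is nonpositive. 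Dropping it and bounding the reaction term by $r\|u\|_{L^2}^2$, I obtain $\tfrac{d}{dt}\|u\|_{L^2}^2\le 2r\|u\|_{L^2}^2$; Gronwall's lemma provides the $L^\infty(L^2)$ bound on $u$, a time integration provides the $L^2((0,T)\times\Omega)$ bound on $\nabla u$, and elliptic regularity then gives the $L^\infty(L^2)$ bounds on $\bm{\omega}$ and on $\nabla\cdot\bm{\omega}$.

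From this point the proof is identical to that of Theorem \ref{th1.5}. I would combine the $L^2$ control of $\nabla\cdot\bm{\omega}$ with the Gagliardo--Nirenberg inequality \eqref{gag5} to propagate $L^\infty(L^p)$ bounds on $u$ for arbitrary $p>2$, use elliptic regularity to turn these into an $L^\infty$ bound on $\bm{\omega}$, apply Lemma~A.1 of \cite{Boundedness} to reach an $L^\infty$ bound on $u$, and finally run the parabolic smoothing estimate for $\partial_t u-\delta\Delta u=-\nabla\cdot(u\bm{\omega})+r\,u(1-u)$ to obtain the $L^\infty(L^q)$ bound on $\nabla u$. Since \cite{well} guarantees that the maximal solution is global as soon as its $W^{1,q}(\Omega)$-norm stays bounded on bounded time intervals, this completes the proof.

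The delicate point, and the one I would check most carefully, is the nonpositivity of the coupling term, that is, the representation $\nabla\cdot\bm{\omega}=\varepsilon^{-1}(u-v)$. It relies on the boundary conditions \eqref{in1.5}--\eqref{in2.5} being exactly those for which the divergence of $\bm{\omega}$ inherits a homogeneous Neumann condition (this is precisely why \eqref{in2.5} is imposed), together with the $L^p$-contractivity of the Neumann resolvent. Once this is secured, the monostable estimate is in fact softer than its bistable counterpart, since the same computation performed with the test function $u^{p-1}$ shows the coupling term $-\tfrac{p-1}{p\varepsilon}\int_\Omega u^p(u-v)\,dx$ to be nonpositive for every $p$, so that all the higher $L^p$ estimates are obtained directly and uniformly in $p$.
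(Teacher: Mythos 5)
Your overall scheme is the right one, and you correctly identified that everything after the first a priori estimate (the Gagliardo--Nirenberg iteration, elliptic regularity for $\bm{\omega}$, Lemma~A.1 of \cite{Boundedness}, and the semigroup estimate for $\nabla u$) carries over verbatim from the bistable case -- this is exactly how the paper concludes. But your replacement for the first estimate contains a genuine gap: the representation $\nabla\cdot\bm{\omega}=\varepsilon^{-1}(u-v)$ is false in general, because $w=\nabla\cdot\bm{\omega}$ does \emph{not} inherit a homogeneous Neumann condition from \eqref{in1.5}--\eqref{in2.5} on a curved boundary. To see this concretely, use the 2D identity $\nabla(\nabla\cdot\bm{\omega})=\Delta\bm{\omega}+\mathrm{curl}\,(\mathrm{curl}\,\bm{\omega})$ and the equation $\Delta\bm{\omega}=\varepsilon^{-1}(\bm{\omega}-\nabla E(u))$: since $\bm{\omega}\cdot\bm{n}=0$ and $\partial_{\bm{n}}E(u)=E'(u)\,\partial_{\bm{n}}u=0$ on $\partial\Omega$, one gets $\partial_{\bm{n}}(\nabla\cdot\bm{\omega})=\partial_{\bm{\tau}}(\mathrm{curl}\,\bm{\omega})$ on $\partial\Omega$, where $\bm{\tau}$ is the unit tangent. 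Moreover, a boundary computation using $\bm{\omega}\cdot\bm{n}=0$ and $\partial_{\bm{n}}\bm{\omega}\cdot\bm{\tau}=0$ shows $\mathrm{curl}\,\bm{\omega}=\kappa\,(\bm{\omega}\cdot\bm{\tau})$ on $\partial\Omega$, $\kappa$ being the curvature; so the Neumann property you need holds only if $\kappa\,(\bm{\omega}\cdot\bm{\tau})$ is constant along each boundary component, which fails generically. Equivalently, the gradient ansatz $\bm{\omega}=\nabla\psi$ with $-\varepsilon\Delta\psi+\psi=E(u)$, $\partial_{\bm{n}}\psi=0$, violates \eqref{in2.5}, since $(\partial_{\bm{n}}\nabla\psi)\cdot\bm{\tau}=\partial_{\bm{\tau}}(\partial_{\bm{n}}\psi)-\kappa\,\partial_{\bm{\tau}}\psi=-\kappa\,\partial_{\bm{\tau}}\psi\neq 0$ in general; taking the curl of the $\bm{\omega}$-equation confirms $\mathrm{curl}\,\bm{\omega}\not\equiv 0$. (Your parenthetical "this is precisely why \eqref{in2.5} is imposed" is a misreading: the paper imposes \eqref{in2.5} for well-posedness of the vector elliptic system, following \cite{asimplified, unprobleme, quelques}, not to give $\nabla\cdot\bm{\omega}$ a Neumann condition.) Consequently the sign of the coupling term $-\tfrac12\int_\Omega u^2\,\nabla\cdot\bm{\omega}\,dx$ is not controlled by your argument, and the same flaw undermines your claimed direct $L^p$ estimates "uniformly in $p$".

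The paper's actual fix avoids any pointwise or sign information on $\nabla\cdot\bm{\omega}$: one tests the $u$-equation with $\log u+1$ rather than $u$. The coupling term then becomes exactly $\int_\Omega\bm{\omega}\cdot\nabla u\,dx$, which cancels identically against the energy identity $\varepsilon\,\|\nabla\cdot\bm{\omega}\|_2^2+\|\bm{\omega}\|_2^2=-\int_\Omega\bm{\omega}\cdot\nabla u\,dx$ obtained by multiplying the second equation by $\bm{\omega}$ (here $\nabla E(u)=-\nabla u$ is what makes the cancellation exact), while the reaction term is harmless since $u(1-u)\log u\le 0$ and $u(1-u)\le 1$. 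This yields an $L^\infty(L\log L)$ bound on $u$ together with the space-time bound \eqref{fiw5} on $\bm{\omega}$ and $\nabla\cdot\bm{\omega}$, and that is all the subsequent machinery needs: the $p=2$ case of the Lemma~\ref{le2.5} argument recovers the $L^\infty(L^2)$ bound on $u$ from $\int_0^t\|\nabla\cdot\bm{\omega}(s)\|_2^2\,ds\le C(T)$ alone. If you replace your first step by this entropy estimate, the remainder of your outline is precisely the paper's proof.
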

The proof of the previous theorem follows the same lines as that of Theorem \ref{th1.5}. As in the bistable case, there is a cancellation between the two equations which provide us an $L^\infty(L\log L)$ bound on $u$ and an $L^2$ bound for $\nabla \cdot\bm{\omega}$ as a starting point.

\section{Well-posedness}
Throughout this paper and unless otherwise stated, we assume that 
$$ \delta\in (0,1),\ \varepsilon>0,\ r\geq0.$$
We first recall some properties of the strong solution of the following system,
\begin{equation}
\label{ellip5}
\left\{
\begin{array}{llll}
\displaystyle -\varepsilon\ \Delta \bm{\omega}+\bm{ \omega}&=&f,& \mathrm{in}\ \Omega, \\
\displaystyle \bm{\omega}\cdot \bm{n}&=&0,& \mathrm{on}\ \partial \Omega,\\
\displaystyle \partial_{\bm{n}} \bm{\omega}\times \bm{n}&=&0,& \mathrm{on}\ \partial \Omega,
\end{array}
\right.
\end{equation}
where $f\in (L^p(\Omega))^2$ and $p>1$. The strong solutions of \eqref{ellip5} is solving \eqref{ellip5} a.e. in $\Omega$. In this direction the existence and uniqueness of the strong solution to \eqref{ellip5} are proved in \cite{quelques}:
\begin{theorem}\label{reg5}
For $f\in (L^p(\Omega))^2$ with $1<p<\infty$, \eqref{ellip5} has a unique solution in $(W^{2, p}(\Omega))^2$ such that
\begin{equation}\label{regularite5}||\bm{\omega}||_{W^{2, p}}\leq \frac{K(p)}{\varepsilon} \ ||f||_{p},\end{equation}
where $K(p)=K(p,\Omega)$.
\end{theorem}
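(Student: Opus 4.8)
The plan is to treat \eqref{ellip5} as a second–order elliptic system for the unknown $\bm\omega=(\omega_1,\omega_2)$ in which the two scalar equations $-\varepsilon\Delta\omega_i+\omega_i=f_i$ are decoupled in the interior and coupled only through the boundary operators $B_1\bm\omega=\bm\omega\cdot\bm n$ and $B_2\bm\omega=\partial_{\bm n}\bm\omega\times\bm n$. First I would settle the case $p=2$ variationally: on the Hilbert space $V=\{\bm v\in (H^1(\Omega))^2:\ \bm v\cdot\bm n=0\ \text{on}\ \partial\Omega\}$ consider
\[
a(\bm\omega,\bm\phi)=\varepsilon\int_\Omega\nabla\bm\omega:\nabla\bm\phi\,dx+\int_\Omega\bm\omega\cdot\bm\phi\,dx,
\]
which is bounded and coercive since $a(\bm v,\bm v)\ge\min(\varepsilon,1)\,\|\bm v\|_{H^1}^2$. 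Lax--Milgram then produces a unique $\bm\omega\in V$ with $a(\bm\omega,\bm\phi)=\int_\Omega f\cdot\bm\phi\,dx$ for all $\bm\phi\in V$. Here $\bm\omega\cdot\bm n=0$ is the essential condition built into $V$, while integrating by parts and decomposing the boundary term $\int_{\partial\Omega}\partial_{\bm n}\bm\omega\cdot\bm\phi\,d\sigma$ into normal and tangential parts shows that the second condition in \eqref{ellip5}, namely $\partial_{\bm n}\bm\omega\times\bm n=0$ (equivalently $\partial_{\bm n}\bm\omega\cdot\bm\tau=0$), emerges as the natural boundary condition.

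Next I would upgrade to $W^{2,p}$ regularity and to general $1<p<\infty$ through the Agmon--Douglis--Nirenberg theory. Interior $W^{2,p}$ bounds are classical and componentwise. For the boundary estimate I would flatten $\partial\Omega$ locally; if the boundary is straightened to $\{x_2=0\}$ with $\bm n=-e_2$, a direct computation turns the boundary operators into $B_1\bm\omega=\omega_2=0$ and $B_2\bm\omega=\partial_2\omega_1=0$, i.e.\ a \emph{Dirichlet} condition on the normal component and a \emph{Neumann} condition on the tangential component. The decisive point is to verify that $\{-\Delta;B_1,B_2\}$ forms a regular elliptic system, that is, that the boundary operators satisfy the Lopatinskii--Shapiro complementing condition relative to the Laplacian at every boundary point. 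Since at the level of the principal symbol the frozen problem decouples into a scalar Dirichlet problem for $\omega_2$ and a scalar Neumann problem for $\omega_1$, each of which classically satisfies the complementing condition, the system is regular and the ADN a priori estimate yields $\bm\omega\in(W^{2,p}(\Omega))^2$. Existence for general $p$ then follows by the method of continuity (deforming \eqref{ellip5} to the decoupled Dirichlet/Neumann resolvent problem), and uniqueness descends from the $p=2$ case on the bounded domain $\Omega$ (after a regularity bootstrap for $p<2$).

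Finally I would extract the explicit factor $1/\varepsilon$ in \eqref{regularite5}. Reading \eqref{ellip5} as the resolvent equation $(\varepsilon A+I)\bm\omega=f$ with $A=-\Delta$ under the boundary conditions $B_1,B_2$, one has $\bm\omega=\varepsilon^{-1}(A+\varepsilon^{-1})^{-1}f$. The uniform resolvent bounds $\|(A+\lambda)^{-1}\|_{L^p\to L^p}\le C/\lambda$ and $\|A(A+\lambda)^{-1}\|_{L^p\to L^p}\le C$ for $\lambda=\varepsilon^{-1}>0$ (consistent with the half--space Fourier multiplier $|\xi|^2/(\varepsilon|\xi|^2+1)\le 1/\varepsilon$) give $\|\bm\omega\|_p\le C\|f\|_p$ and $\|\Delta\bm\omega\|_p\le (C/\varepsilon)\|f\|_p$; combined with the ADN estimate $\|\bm\omega\|_{W^{2,p}}\le C\,(\|\Delta\bm\omega\|_p+\|\bm\omega\|_p)$ and $\varepsilon<1$, this produces $\|\bm\omega\|_{W^{2,p}}\le (K(p)/\varepsilon)\,\|f\|_p$ with $K=K(p,\Omega)$.

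I expect the main obstacle to be the verification of the Lopatinskii--Shapiro complementing condition --- equivalently, confirming that the mixed boundary pair $\{\bm\omega\cdot\bm n,\ \partial_{\bm n}\bm\omega\times\bm n\}$ is admissible --- since the system couples its two components solely through the boundary, so all of the ellipticity information lives in the boundary operators; this is precisely the role of condition \eqref{in2.5} in guaranteeing well-posedness. A secondary technical point is keeping the dependence on $\varepsilon$ explicit, and uniform in $\varepsilon\in(0,1)$, while carrying out the ADN and resolvent estimates on the curved domain.
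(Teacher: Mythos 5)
Your proposal cannot be compared against an in-paper argument, because the paper contains none: Theorem \ref{reg5} is imported verbatim from Schoenauer \cite{quelques}, with the sentence ``the existence and uniqueness of the strong solution to \eqref{ellip5} are proved in \cite{quelques}'' standing in for a proof. What you have written is therefore a reconstruction of the cited result, and as such it is essentially correct and follows the route one would expect that reference to take. The variational step is sound: on $V=\{\bm v\in (H^1(\Omega))^2:\ \bm v\cdot\bm n=0\ \text{on}\ \partial\Omega\}$ the condition $\bm\omega\cdot\bm n=0$ is essential and $\partial_{\bm n}\bm\omega\times\bm n=0$ (equivalently $\partial_{\bm n}\bm\omega\cdot\bm\tau=0$) is natural; this is precisely the structural fact the paper itself exploits later, when Lemmas \ref{le1.5} and \ref{le7.5} use that $\bm\omega$ is tangent and $\partial_{\bm n}\bm\omega$ normal to $\partial\Omega$, so that $\partial_{\bm n}\bm\omega\cdot\bm\omega=0$. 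Your flattening computation is also exactly right: with the paper's convention $\bm v\times\bm u=v_1u_2-u_1v_2$ and $\bm n=-e_2$, the boundary pair becomes $\omega_2=0$ and $\partial_2\omega_1=0$, so at the level of principal symbols the frozen half-plane problem splits into a scalar Dirichlet problem and a scalar Neumann problem, each of which satisfies the complementing condition; since $-\Delta\,\mathrm{Id}$ is rotation invariant, this pointwise reduction legitimately verifies Lopatinskii--Shapiro on the curved boundary, which you correctly identify as the crux and as the raison d'\^etre of the appended condition \eqref{in2.5}.

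Two minor points deserve flagging, neither a genuine gap. First, the identification of the natural boundary condition via the boundary integral $\int_{\partial\Omega}\partial_{\bm n}\bm\omega\cdot\bm\phi\,d\sigma$ presupposes enough regularity for $\partial_{\bm n}\bm\omega$ to have a trace; the clean ordering is to prove $W^{2,p}$ regularity of the weak solution first and read off the natural condition afterwards, and similarly the absorption of the lower-order term $\|\bm\omega\|_p$ in the ADN estimate and the $L^p$ resolvent bound $\|(A+\lambda)^{-1}\|_{L^p\to L^p}\le C/\lambda$ require the standard sectoriality of regular elliptic boundary value problems, which should be cited rather than asserted. Second, your scaling argument gives $\|\bm\omega\|_{W^{2,p}}\le C(1+\varepsilon^{-1})\|f\|_p$, which yields \eqref{regularite5} only for $\varepsilon$ bounded (your hypothesis $\varepsilon<1$), whereas the paper's standing assumption is merely $\varepsilon>0$; this is harmless here, since $\varepsilon$ is a fixed parameter throughout the paper and the bound is only ever used with $\varepsilon$-dependent constants absorbed into $C(T)$, but as literally stated for all $\varepsilon>0$ the inequality $\|\bm\omega\|_{W^{2,p}}\le (K(p)/\varepsilon)\|f\|_p$ would encode a large-$\varepsilon$ decay that your argument (and the paper's use of the theorem) neither delivers nor needs.
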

In other words, the strong solution has the same regularity as elliptic equations with classical boundary conditions.\\

Thanks to \cite{well} we recall the existence and uniqueness result of the maximal solution of \eqref{grin5}-\eqref{in2.5}.
\begin{theorem}\label{local5}
We assume that $E\in C^2(\mathbb{R})$, let $p>2$ and a nonnegative function\\ $u_0\in W^{1,p}(\Omega)$. Then, for some $T_{\mathrm{max}}\in (0,\infty]$, there is a unique nonnegative maximal solution \begin{equation}\label{l1.5}u\in C\left( [0, T_{\mathrm{max}}), W^{1,p}(\Omega)\right)\cap C\left( (0, T_{\mathrm{max}}), W^{2,p}(\Omega)\right) \end{equation}
 to \eqref{grin5}-\eqref{in2.5} in the sense of Definition \ref{def1.5}. 
Moreover, if for each $T>0$, there is $C(T)$ such that
$$||u(t)||_{W^{1, p}}\leq C(T),\ \ \mathrm{for\ all}\ t\in[0,T]\cap[0,T_{\mathrm{max}}),$$
then $T_{\mathrm{max}}=\infty$. In addition, $u$ satisfies
\begin{equation}\label{eq1.5}
u(t,x)=\left( e^{t(\delta\ \Delta)}\ u_0(x)\right)+\int_0^t e^{(t-s)(\delta\ \Delta)}\ \left[-\nabla\cdot(u\ \bm{\omega})+r\ u\ E(u)\right](s,x)\ ds,
\end{equation}
for $(t,x)\in [0,T_{\mathrm{max}}]\times \Omega$, where $\left(e^{t\ (\delta\ \Delta)}\right)$ denotes the semigroup generated in $L^p(\Omega)$ by $\delta \  \Delta$ with homogeneous Neumann boundary conditions.
\end{theorem}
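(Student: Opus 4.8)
The plan is to recast \eqref{grin5}--\eqref{in2.5} as an abstract semilinear Cauchy problem governed by the analytic semigroup $\left(e^{t\delta\Delta}\right)_{t\ge 0}$ generated in $L^p(\Omega)$ by $\delta\Delta$ with homogeneous Neumann boundary conditions, and to solve it by a Banach fixed point argument in $C([0,T],W^{1,p}(\Omega))$ for $T$ small. The structural feature I would exploit throughout is that, since $\Omega\subset\mathbb{R}^2$ and $p>2$, the space $W^{1,p}(\Omega)$ embeds continuously into $C(\overline{\Omega})$ and is a Banach algebra; this renders the reaction term $r\,u\,E(u)$ and all products easy to control once the function $E\in C^2(\mathbb{R})$ is composed with the bounded function $u$, and it supplies the uniform $L^\infty$ bounds on $u$, $E(u)$, $E'(u)$ needed on a ball of $W^{1,p}(\Omega)$.

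First I would record the two estimates on which everything rests: the gradient smoothing of the semigroup,
\begin{equation*}
\|\nabla e^{t\delta\Delta}g\|_{p}\le C\,(\delta t)^{-1/2}\,\|g\|_{p},\qquad t>0,
\end{equation*}
and the elliptic bound of Theorem \ref{reg5}, which gives $\|\bm{\omega}\|_{W^{2,p}}\le \tfrac{K(p)}{\varepsilon}\|\nabla E(u)\|_{p}\le C\,\|E'(u)\|_{\infty}\,\|\nabla u\|_{p}$. Writing the nonlinearity as
\begin{equation*}
f(u)=-\nabla\cdot(u\,\bm{\omega})+r\,u\,E(u)=-\,\bm{\omega}\cdot\nabla u-u\,(\nabla\cdot\bm{\omega})+r\,u\,E(u),
\end{equation*}
the algebra property together with Theorem \ref{reg5} shows that $u\mapsto f(u)$ maps $W^{1,p}(\Omega)$ into $L^p(\Omega)$ and is locally Lipschitz there (the hypothesis $E\in C^2$ being used precisely to make $E'$ Lipschitz on bounded sets). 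Setting
\begin{equation*}
\Phi(u)(t)=e^{t\delta\Delta}u_0+\int_0^t e^{(t-s)\delta\Delta}f(u(s))\,ds,
\end{equation*}
I would estimate the gradient of the integral by $\int_0^t C\,(t-s)^{-1/2}\|f(u(s))\|_{p}\,ds\le C\sqrt{t}\,\sup_s\|f(u(s))\|_{p}$, the singularity $(t-s)^{-1/2}$ being integrable. Hence $\Phi$ maps a suitable ball of $C([0,T],W^{1,p})$ into itself and is a contraction for $T$ small, yielding a unique local mild solution that satisfies \eqref{eq1.5} by construction; a standard parabolic bootstrap (Hölder continuity of $s\mapsto f(u(s))$ followed by the regularizing action of the analytic semigroup) then upgrades it to the asserted regularity $C((0,T_{\mathrm{max}}),W^{2,p})$.

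I would obtain nonnegativity by freezing $\bm{\omega}$ along the solution and reading the $u$-equation as a linear advection--diffusion equation $\partial_t u=\delta\Delta u-\bm{\omega}\cdot\nabla u+c(t,x)\,u$ with bounded coefficients, where $c=r\,E(u)-\nabla\cdot\bm{\omega}$; testing with $u^-=\max\{-u,0\}$ and applying Gronwall's lemma shows $u^-\equiv 0$ whenever $u_0\ge 0$, the weak maximum principle being available because the zeroth-order term carries the factor $u$. Finally the maximal solution and the blow-up alternative follow from the usual continuation argument: the local existence time may be chosen uniformly on balls of $W^{1,p}$, so as long as $t\mapsto\|u(t)\|_{W^{1,p}}$ stays bounded on $[0,T]$ the solution can be restarted past $T$, whence $T_{\mathrm{max}}\le T$ would force $\|u(t)\|_{W^{1,p}}$ to blow up as $t\to T_{\mathrm{max}}$.

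I expect the main obstacle to be the divergence term $\nabla\cdot(u\,\bm{\omega})$, which costs one derivative of the unknown and is moreover coupled nonlocally to $u$ through the elliptic system \eqref{ellip5}. The entire scheme hinges on absorbing this loss of one derivative through the $(t-s)^{-1/2}$ gradient-smoothing of the semigroup, combined with the two-dimensional embedding $W^{1,p}\hookrightarrow L^\infty$; it is here, and in invoking the nonstandard boundary conditions \eqref{in2.5} through Theorem \ref{reg5} to keep $\bm{\omega}$ controlled in $W^{2,p}$, that the estimates are most delicate.
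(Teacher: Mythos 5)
Your proposal is correct, but note that the paper itself contains no proof of Theorem \ref{local5}: it is recalled from \cite{well}, where it is established by essentially the scheme you describe, namely a Banach fixed point argument for the Duhamel formulation \eqref{eq1.5} in $C([0,T],W^{1,p}(\Omega))$ using the gradient smoothing estimate \eqref{eq2.5}, the elliptic regularity of Theorem \ref{reg5} (with $E\in C^2$ giving the local Lipschitz property of $u\mapsto\bm{\omega}(u)$ in $W^{2,p}$), a parabolic bootstrap for the $W^{2,p}$ regularity, the maximum principle for nonnegativity, and the standard continuation argument for the blow-up alternative. So your blind reconstruction matches the intended proof in approach and in the key technical points.
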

We recall that there is  $C>0$ such that
\begin{equation}\label{eq2.5}
||e^{t\ (\delta\ \Delta)} v||_{W^{1,p}}\leq C \ ||v||_{W^{1,p}},\ \mathrm{and}\ ||\nabla e^{t\ (\delta\ \Delta)} v ||_p\leq C\ \delta^{-\frac{1}{2}}\ t^{-\frac{1}{2}}\ ||v||_p.
\end{equation}
Also in several places we shall need the following Gagliardo-Nirenberg inequality
\begin{equation}
\label{gag5}
||u||_p\leq C\ ||u||_{W^{1,2}}^\theta\ ||u||_q^{1-\theta},\ \ \mathrm{with}\ \theta=\frac{p-q}{p}, \ u\in W^{1,2}(\Omega)
\end{equation}
which holds for all $p\geq 1$ and $q\in [1,p]$.
Also we use the following singular Gronwall lemma (see \cite[Theorem 3.3.1]{linear}).
\begin{lemma}\label{Gronwall}
Given $ \alpha, \beta\in [0,1)$ , there exists a positive constant $c:= c(\alpha, \beta)$ such that the following is true:\\

If $f:\ (0,T)\longrightarrow \mathbb{R}$ satisfies
\begin{equation}\label{gr1.5}
\left[t\mapsto t^\beta\ f(t)\right]\in L^\infty_{\mathrm{loc}}((0,T), \mathbb{R}),
\end{equation}
and
\begin{equation}
f(t)\leq A\ t^{-\beta}+B\ \int_0^t \frac{1}{(t-s)^{\alpha}}\ f(s)\ ds,\ \ a.a.t \in (0,T),
\end{equation}
 where $A$ and $B$ are positive constants, then
$f(t)\leq C(T)$, for all $t\in (0,T)$, where $C$ depends only on $T,\ \alpha,\ \beta,$ and $\gamma$.
\end{lemma}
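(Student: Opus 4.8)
The plan is to treat the inequality as an iteration inequality for the linear Volterra operator with weakly singular kernel and to exploit the smoothing produced by iterating that operator. Write $g(t):=A\,t^{-\beta}$ and, for a locally integrable $h$, set
$$(Th)(t):=B\int_0^t (t-s)^{-\alpha}\,h(s)\,ds.$$
The growth control \eqref{gr1.5} forces $f$ to be integrable near $t=0$ (the admissible rate $t^{-\beta}$ with $\beta<1$ being integrable), so $Tf$ is well defined and the assumption reads $f\le g+Tf$ pointwise a.e. Since $T$ is monotone on nonnegative functions, iterating $n$ times gives
$$f(t)\le \sum_{k=0}^{n-1}(T^k g)(t)+(T^n f)(t).$$

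The heart of the computation is the explicit form of the iterated kernels. First I would record the Beta-function identity
$$\int_0^t (t-s)^{a-1}\,s^{b-1}\,ds=\frac{\Gamma(a)\,\Gamma(b)}{\Gamma(a+b)}\,t^{\,a+b-1},\qquad a,b>0,$$
whose applicability is guaranteed precisely by $\alpha,\beta<1$ keeping every exponent admissible. An induction on $k$ based on this identity expresses $T^k$ as convolution against
$$k_k(\tau)=\frac{\big(B\,\Gamma(1-\alpha)\big)^k}{\Gamma\big(k(1-\alpha)\big)}\,\tau^{\,k(1-\alpha)-1},$$
and one further Beta integral against $g(t)=A\,t^{-\beta}$ yields
$$(T^k g)(t)=A\,\Gamma(1-\beta)\,\frac{\big(B\,\Gamma(1-\alpha)\big)^k}{\Gamma\big(k(1-\alpha)+1-\beta\big)}\,t^{\,k(1-\alpha)-\beta}.$$

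With these formulas the result follows by summation together with a decay estimate on the remainder. Because $1-\alpha>0$, the exponents $k(1-\alpha)-\beta$ eventually turn nonnegative, so each $T^k g$ is controlled on $[0,T]$ by $t^{-\beta}$ times a constant; summing in $k$ produces a Mittag--Leffler-type series in $t^{1-\alpha}$ whose convergence is guaranteed by the super-exponential growth of $\Gamma\big(k(1-\alpha)+1-\beta\big)$, giving $\sum_{k\ge0}(T^kg)(t)\le C(T,\alpha,\beta,A,B)\,t^{-\beta}$, uniformly on $[0,T]$. For the remainder I would fix $n$ with $n(1-\alpha)>1$, so that $k_n$ is bounded on $(0,T)$; then $(T^n f)(t)\le \|k_n\|_{L^\infty(0,T)}\,\|f\|_{L^1(0,t)}$, and since the prefactor $\big(B\,\Gamma(1-\alpha)\big)^n/\Gamma\big(n(1-\alpha)\big)\to0$ as $n\to\infty$ while $\|f\|_{L^1(0,t)}$ is finite for each $t<T$ by \eqref{gr1.5}, one has $T^n f\to 0$. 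Passing to the limit yields $f(t)\le C(T)\,t^{-\beta}$, which is the asserted bound (and in particular $f(t)\le C(T)$ once $t$ is bounded away from $0$).

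I expect the main obstacle to be the last step, namely controlling the remainder $T^n f$ uniformly: this is the only place where the a priori hypothesis \eqref{gr1.5} is genuinely used, and one must check both that the bounded-kernel estimate is legitimate and that the hypothesis really supplies the integrability of $f$ up to $t=0$. By contrast, the Beta--Gamma bookkeeping for the iterated kernels, although it carries the computation, is routine once the inductive scheme is set up.
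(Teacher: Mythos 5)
Your proof is correct, but note that the paper itself does not prove this lemma at all: it is quoted directly from Amann \cite[Theorem 3.3.1]{linear}, so there is no internal argument to compare against. What you give is, in essence, the classical argument behind that citation: iterate the monotone weakly singular Volterra operator, compute the iterated kernels exactly via the Beta--Gamma identity, sum the resulting Mittag--Leffler series in $t^{1-\alpha}$, and kill the remainder $T^n f$ using that $\bigl(B\,\Gamma(1-\alpha)\bigr)^n\,T^{n(1-\alpha)-1}/\Gamma\bigl(n(1-\alpha)\bigr)\to 0$ while $f\in L^1(0,t)$ for each fixed $t<T$; all of your exponent bookkeeping checks out ($k_1$ reproduces the kernel of $T$, the induction step is a single Beta integral, and $k(1-\alpha)-\beta\geq -\beta$ gives the uniform bound $T^k g\leq C_k\,t^{-\beta}$ on $[0,T]$). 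Two remarks. First, what you actually prove is $f(t)\leq C(T)\,t^{-\beta}$, and that is the correct conclusion: the paper's ``$f(t)\leq C(T)$ for all $t\in(0,T)$'' cannot hold as stated when $\beta>0$ (take $f(t)=A\,t^{-\beta}$, which satisfies the hypotheses trivially and is unbounded near $0$); this is harmless here because the lemma is invoked only with $\beta=0$ in Lemma \ref{le6.5}, and the stray ``$\gamma$'' in the statement's list of dependencies is a typo. Second, the point you flagged as the main obstacle is indeed the one soft spot, but it is a defect of the statement rather than of your argument: if $L^\infty_{\mathrm{loc}}((0,T))$ in \eqref{gr1.5} is read literally as boundedness on compact subsets of the open interval, then integrability of $f$ at $t=0$ does \emph{not} follow (e.g.\ $f(t)=t^{-1}$ with $\beta=0$ satisfies \eqref{gr1.5} in that reading, the integral inequality holds vacuously with an infinite right-hand side, and the conclusion fails); the hypothesis must be read, as in Amann, as $t^\beta f(t)$ essentially bounded on $(0,T')$ for each $T'<T$, and under that reading your remainder estimate, and hence the whole proof, is sound.
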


\section{Global existence}
\subsection{The bistable case: $E(u)=(1-u)(u-a)$}

We recall the system 
\begin{equation}
\label{bc5}
\left\{
\begin{array}{llll}
\displaystyle \partial_t u&=& \delta\ \Delta u-\nabla\cdot(u\ \bm{\omega})+r\ u\ (1-u)\ (u-a),& x\in \Omega, t>0 \\
\displaystyle -\varepsilon\ \Delta \bm{\omega}+\bm{\omega}&=& [-2\ u+(a+1)]\ \nabla u,& x\in \Omega, t>0 \\
\displaystyle \partial_{\bm{n}}u=0&,& \bm{\omega}\cdot \bm{n}=\partial_{\bm{n}} \bm{\omega}\times \bm{n}=0,& x\in \partial \Omega, t>0\\
\displaystyle u(0,x)&=&u_0(x),& x\in \Omega.
\end{array}
\right.
\end{equation}
for a some $a\in (0,1)$, and $u_0\in W^{1,q}(\Omega)$ for some $q>2$.\\

Since $E\in C^2(\mathbb{R})$, Theorem \ref{local5} ensures that there is a maximal solution of \eqref{bc5} in $C\left( [0, T_{\mathrm{max}}), W^{1,q}(\Omega)\right)\cap C\left( (0, T_{\mathrm{max}}), W^{2,q}(\Omega)\right) $  for $q>2$.\\

We begin the proof by the following lemmas which gives some estimates on $u$ and $\bm{\omega}$.

\begin{lemma}\label{le1.5}
Let the same assumptions as that of Theorem \ref{th1.5} hold, and $u$ be the nonnegative maximal solution of \eqref{bc5}. Then for all $T>0$ there exists $C_1(T)>0$, such that $u$ and $\bm{\omega}$ satisfy the following estimates
\begin{equation}\label{dxu5}
||u(t)||^2_2+\int_0^t ||\nabla u(s)||_2^2\ ds\leq C_1(T),\ \ \mathrm{for\ all}\ t\in [0,T]\cap[0, T_{\mathrm{max}}),
\end{equation}
and
\begin{equation}\label{fi5}
\int_0^t \left(||\nabla \cdot \bm{\omega}(s)||^2_{2}+||\bm{\omega}(s)||_2^2\right)\ ds\leq C_1(T)\ \ \mathrm{for\ all}\ t\in [0,T]\cap[0, T_{\mathrm{max}}).
\end{equation}
\end{lemma}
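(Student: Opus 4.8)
The plan is to derive the two estimates \eqref{dxu5} and \eqref{fi5} simultaneously, by testing the $u$-equation against $u$ itself and exploiting a cancellation with the $\bm\omega$-equation. Let me think about what the natural energy estimate gives and where the coupling term must be controlled.

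Let me carefully examine the structure. The first equation is
\[
\partial_t u = \delta\,\Delta u - \nabla\cdot(u\,\bm\omega) + r\,u\,(1-u)\,(u-a),
\]
and the second is $-\varepsilon\,\Delta\bm\omega + \bm\omega = \nabla E(u)$ with $E(u)=(1-u)(u-a)$, so that $\nabla E(u) = E'(u)\,\nabla u = [-2u+(a+1)]\,\nabla u$. The key observation is that $\nabla E(u) = \nabla\cdot(u\bm\omega)$ is tied to $\bm\omega$ through this second equation. So the plan is as follows.

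First, I would multiply the $u$-equation by $u$ and integrate over $\Omega$. Using the no-flux boundary condition $\partial_{\bm n}u=0$, integration by parts on the diffusion term yields $-\delta\|\nabla u\|_2^2$. The transport term $-\int_\Omega u\,\nabla\cdot(u\bm\omega)\,dx$ integrates by parts to $\int_\Omega \nabla u\cdot(u\bm\omega)\,dx$ (the boundary term vanishes since $\bm\omega\cdot\bm n=0$), which I would rewrite as $\tfrac12\int_\Omega \bm\omega\cdot\nabla(u^2)\,dx = -\tfrac12\int_\Omega u^2\,(\nabla\cdot\bm\omega)\,dx$. The plan is then to relate $\nabla\cdot\bm\omega$ back to $u$ via the second equation. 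Taking the divergence of $-\varepsilon\Delta\bm\omega+\bm\omega=E'(u)\nabla u$ gives $-\varepsilon\Delta(\nabla\cdot\bm\omega)+\nabla\cdot\bm\omega=\Delta E(u)$; alternatively, I would test the second equation suitably to extract an $L^2$ control on $\nabla\cdot\bm\omega$ in terms of $\nabla u$, which is where \eqref{fi5} comes from. This gives the second energy identity, and the hope is that a favorable sign (or a Young-inequality absorption into $\delta\|\nabla u\|_2^2$) closes the estimate.

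The main obstacle, and the step requiring care, is controlling the coupling term $\tfrac12\int_\Omega u^2\,(\nabla\cdot\bm\omega)\,dx$ together with the reaction term $r\int_\Omega u^2(1-u)(u-a)\,dx$. The reaction integrand is a quartic polynomial in $u$ with a negative leading coefficient $-r u^4$, so it should be possible to bound the reaction contribution by $C\|u\|_2^2 + C$ using the negativity of the top-degree term to absorb the dangerous cubic/quartic growth (this is precisely where the bistable structure helps, and is the step that fails for a general $E$). For the coupling term, I expect to pair it against the $\bm\omega$-equation tested by $\bm\omega$: testing $-\varepsilon\Delta\bm\omega+\bm\omega=E'(u)\nabla u$ against $\bm\omega$ gives $\varepsilon\|\nabla\bm\omega\|_2^2+\|\bm\omega\|_2^2=\int_\Omega E'(u)\nabla u\cdot\bm\omega\,dx$, which produces exactly the $L^2$ bound on $\bm\omega$ appearing in \eqref{fi5} and lets the cross terms cancel. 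After these cancellations I would arrive at a differential inequality of the form $\tfrac{d}{dt}\|u\|_2^2 + \delta\|\nabla u\|_2^2 + c(\|\nabla\cdot\bm\omega\|_2^2+\|\bm\omega\|_2^2)\le C\|u\|_2^2 + C$, with the constants depending only on $r$, $a$, $\delta$, $\varepsilon$, and $\Omega$.

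Finally, applying Gronwall's lemma to this inequality yields the $L^\infty(L^2)$ bound on $u$ and, after integrating in time over $[0,t]$, the integrated control of $\|\nabla u\|_2^2$, giving \eqref{dxu5}; the integrated control of $\|\nabla\cdot\bm\omega\|_2^2+\|\bm\omega\|_2^2$ then gives \eqref{fi5}. Throughout I would keep the constants in the form $C_1(T)$ as required, noting the dependence on $T$ enters only through the Gronwall exponential. The whole scheme hinges on the sign of the quartic reaction term and the exact matching of the coupling terms, so I expect that identifying the precise cancellation between $\int_\Omega u^2(\nabla\cdot\bm\omega)\,dx$ and the $\bm\omega$-test identity will be the delicate bookkeeping point of the argument.
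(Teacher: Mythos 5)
Your proposal is correct and follows essentially the same route as the paper's proof: test the $u$-equation with (a multiple of) $u$, test the $\bm{\omega}$-equation with $\bm{\omega}$, cancel the cubic coupling terms, bound the reaction term using the bistable sign structure ($u^2E(u)\leq 0$ outside $(a,1)$, or equivalently your negative-quartic-leading-term argument), and close with Young and Gronwall. Two bookkeeping points to fix when executing the plan: you must multiply the $u$-equation by $2u$ (testing with $u$ alone leaves an uncancelled residual $\tfrac{1}{2}\int_\Omega u^2\,\nabla\cdot\bm{\omega}\,dx$, which for $r$ small or zero cannot be absorbed; equivalently, add only half of the $\bm{\omega}$-identity), and the leftover linear term $(a+1)\int_\Omega \bm{\omega}\cdot\nabla u\,dx$ should be integrated by parts to $-(a+1)\int_\Omega u\,\nabla\cdot\bm{\omega}\,dx$ and absorbed into $\varepsilon\,\|\nabla\cdot\bm{\omega}\|_2^2$ rather than into $\delta\,\|\nabla u\|_2^2$, since $\delta\in(0,1)$ may be small compared with $(a+1)^2/2$.
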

\begin{proof}
We multiply the first equation in \eqref{bc5} by $2\ u$ and integrate it over $\Omega$, to obtain
\begin{equation}\label{cub5}
\frac{\mathrm{d}}{\mathrm{d}t} \int_{\Omega}|u|^2\ dx=-2\ \delta \int_{\Omega}|\nabla u|^2\ dx+2\  \int_{\Omega}\ u\ \bm{\omega}\cdot\nabla u\ dx+2\ r\  \int_{\Omega} u^2\ E(u)\ dx.
\end{equation}
We multiply now the second equation in \eqref{bc5} by $\bm{\omega}$ and integrate it over $\Omega$. We note that the boundary conditions for $\bm{\omega}$ guarantee that $\bm{\omega}$ is tangent to $\partial\Omega$ while $\partial_{\bm{n}}\bm{\omega}$ is normal to $\partial\Omega$. Consequently, $\partial_{\bm{n}} \bm{\omega}\cdot \bm{\omega}=0$ on $\partial\Omega$ and it follows from an integration by parts that
\begin{eqnarray*}
-\varepsilon \int_{\Omega} \Delta \bm{\omega}\ \cdot \bm{\omega}\ dx&=&\varepsilon\  \int_{\Omega}|\nabla\cdot \bm{\omega}|^2\ dx-\varepsilon \int_{\partial\Omega}\left[\ (\nabla \omega_1\cdot \bm{n})\ \omega_1+(\nabla \omega_2\cdot \bm{n})\ \omega_2\ \right] \ d\sigma\\
&=&\varepsilon\  \int_{\Omega}|\nabla\cdot \bm{\omega}|^2\ dx.
\end{eqnarray*}
We thus obtain
\begin{equation}\label{phi5}
\varepsilon\  \int_{\Omega}|\nabla\cdot \bm{\omega}|^2\ dx+ \int_{\Omega}|\bm{\omega}|^2\ dx=-2\  \int_{\Omega} u\ \bm{\omega}\cdot\nabla u\  dx+(a+1)\int_{\Omega}\bm{\omega}\cdot\nabla u\ dx.
\end{equation}
At this point we notice that the cubic terms on the right hand side of \eqref{cub5} and \eqref{phi5} cancel one with the other, and summing \eqref{phi5} and \eqref{cub5} we obtain 
\begin{equation*}
\frac{\mathrm{d}}{\mathrm{d}t}||u||^2_2+\varepsilon\ ||\nabla\cdot \bm{\omega}||_2^2+||\bm{\omega}||^2_2+2\ \delta\ ||\nabla u||_2^2=2\ r \int_{\Omega} u^2\ E(u)\ dx+ (a+1)\int_{\Omega} \bm{\omega}\cdot\nabla u\ dx.
\end{equation*}
We integrate by parts and use Cauchy-Schwarz inequality to obtain
\begin{equation}
(a+1)\ \int_{\Omega} \bm{\omega}\cdot\nabla u\ dx=-(a+1)\int_{\Omega} u\  \nabla\cdot \bm{\omega}\ dx\nonumber\\
\leq\frac{(a+1)^2}{2\ \varepsilon}\ ||u||_2^2+\frac{\varepsilon}{2}\ ||\nabla \cdot \bm{\omega}||_2^2.\nonumber
\end{equation}
On the other hand, $u^2\ E(u)\leq 0$ if $u\notin (a,1)$ so that
$$\int_{\Omega} u^2\ E(u)\ dx\leq |\Omega|\ (1-a).$$
The previous inequalities give 
\begin{equation*}\frac{\mathrm{d}}{\mathrm{d}t}||u||^2_2+\frac{\varepsilon}{2}\ ||\nabla\cdot \bm{\omega}||_2^2+||\bm{\omega}||^2_2+2\ \delta\ ||\nabla u||_2^2\leq\frac{(a+1)^2}{2\ \varepsilon}\ ||u||_2^2+ 2\ |\Omega|\ r\ (1-a).\end{equation*}
Therefore, for all $T>0$ there exists $C_1(T)$ such that \eqref{dxu5} and \eqref{fi5} hold.
\end{proof}
\begin{lemma}\label{le2.5}
Let the same assumptions as that of Theorem \ref{th1.5} hold, and $u$ be the nonnegative maximal solution of \eqref{bc5}. Then for all $T>0$ there exists $C_2(T,p)>0$, such that for $p\geq2$
\begin{equation}\label{eq3.5}
||u(t)||_p\leq C_2(T,p)\ \ \mathrm{for\ all}\ t\in [0,T]\cap[0, T_{\mathrm{max}}),
\end{equation}
\begin{equation}\label{eq4.5}
\int_{0}^t||\nabla u^{\frac{p}{2}}(s)||_2^2\ ds\leq C_2(T,p)\ \ \mathrm{for\ all}\ t\in [0,T]\cap[0, T_{\mathrm{max}}).
\end{equation}
\end{lemma}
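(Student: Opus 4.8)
The plan is to derive a differential inequality for $y(t):=||u(t)||_p^p$ and close it by Gronwall's lemma, using the time-integral bound on $||\nabla\cdot\bm{\omega}||_2$ from Lemma \ref{le1.5}. First I would multiply the first equation in \eqref{bc5} by $p\,u^{p-1}$ and integrate over $\Omega$. Writing $v:=u^{p/2}$ and using $u^{p-2}\,|\nabla u|^2=\tfrac{4}{p^2}\,|\nabla v|^2$, the diffusion term contributes $-\tfrac{4\,\delta\,(p-1)}{p}\,||\nabla v||_2^2$ to the left-hand side. Integrating the advective term $-p\int_\Omega u^{p-1}\,\nabla\cdot(u\,\bm{\omega})\,dx$ by parts twice — the boundary contributions vanishing because $\bm{\omega}\cdot\bm{n}=0$ — turns it into $-(p-1)\int_\Omega u^p\,\nabla\cdot\bm{\omega}\,dx$. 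For the reaction term, a sign analysis gives $u^p\,E(u)\le\big(\tfrac{1-a}{2}\big)^2$ (indeed $E(u)\le 0$ when $u\notin(a,1)$, while $u^p\le 1$ and $E(u)\le\big(\tfrac{1-a}{2}\big)^2$ when $u\in(a,1)$), so it is bounded by a constant $C_p$. This produces
\begin{equation*}
\frac{\mathrm{d}}{\mathrm{d}t}\,y+\frac{4\,\delta\,(p-1)}{p}\,||\nabla v||_2^2\le (p-1)\,||u^p||_2\,||\nabla\cdot\bm{\omega}||_2+C_p.
\end{equation*}

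The key step, and the main obstacle, is to estimate the advective term so that the time-dependent factor multiplying $y$ is $||\nabla\cdot\bm{\omega}||_2^2$ — which is integrable in time by \eqref{fi5} — rather than $||\nabla\cdot\bm{\omega}||_2$ itself, which we do not control pointwise in $t$. I would write $||u^p||_2=||v||_4^2$ and apply the Gagliardo--Nirenberg inequality \eqref{gag5} to $v$ with exponents $4$ and $2$, so that $\theta=\tfrac12$ and $||v||_4^2\le C\,||v||_{W^{1,2}}\,||v||_2\le C\,(||\nabla v||_2+||v||_2)\,||v||_2$, where $||v||_2^2=y$. Substituting this bound and then invoking Young's inequality twice — once to split off the factor $||\nabla v||_2$, so that a small multiple of $||\nabla v||_2^2$ is absorbed by the diffusion term on the left, and once on the remaining lower-order factor $y\,||\nabla\cdot\bm{\omega}||_2$ — yields
\begin{equation*}
\frac{\mathrm{d}}{\mathrm{d}t}\,y+\frac{2\,\delta\,(p-1)}{p}\,||\nabla v||_2^2\le \Phi(t)\,y+C_p,\qquad \Phi(t):=A\,||\nabla\cdot\bm{\omega}(t)||_2^2+B,
\end{equation*}
with constants $A,B$ depending only on $p,\delta,\Omega,a$.

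By \eqref{fi5} we have $\int_0^T\Phi(s)\,ds\le A\,C_1(T)+B\,T=:M(T,p)<\infty$, and since $u_0\in W^{1,q}(\Omega)\hookrightarrow L^\infty(\Omega)$ (as $q>2$ in dimension two) the initial value $y(0)=||u_0||_p^p$ is finite. The classical Gronwall lemma — the integrability of $\Phi$ makes the singular version Lemma \ref{Gronwall} unnecessary here — then gives $y(t)\le\big(y(0)+C_p\,T\big)\,e^{M(T,p)}$ for all $t\in[0,T]\cap[0,T_{\mathrm{max}})$, which is precisely \eqref{eq3.5}. Finally, integrating the last differential inequality over $(0,t)$ and inserting the bound on $y$ just obtained controls $\int_0^t||\nabla v||_2^2\,ds=\int_0^t||\nabla u^{p/2}||_2^2\,ds$, giving \eqref{eq4.5}. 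The only delicate point throughout is the weak, merely square-integrable-in-time control of the coupling term $\nabla\cdot\bm{\omega}$: every estimate must be arranged to expose $||\nabla\cdot\bm{\omega}||_2^2$ as the Gronwall weight, which is exactly what the choice $\theta=\tfrac12$ in \eqref{gag5} together with the subsequent Young splitting accomplishes.
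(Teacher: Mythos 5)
Your proposal is correct and follows essentially the same route as the paper: multiplying by $p\,u^{p-1}$, integrating by parts to reach $-(p-1)\int_\Omega u^p\,\nabla\cdot\bm{\omega}\,dx$, bounding the reaction term by a constant, applying Gagliardo--Nirenberg \eqref{gag5} with $\theta=\tfrac12$ to $v=u^{p/2}$, and using Young's inequality to absorb $||\nabla v||_2$ into the diffusion term so that $||\nabla\cdot\bm{\omega}||_2^2$ appears as the time-integrable Gronwall weight controlled by \eqref{fi5}. Your explicit identification of this last point as the crux, and your slightly sharper constant $\bigl(\tfrac{1-a}{2}\bigr)^2$ for the reaction term, are minor refinements of the paper's argument, not departures from it.
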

\begin{proof}
We multiply the first equation in \eqref{bc5} by $p\ u^{p-1}$, integrate with respect to $x$, and integrate by parts. The boundary terms vanish and we obtain
\begin{eqnarray}
\frac{\mathrm{d}}{\mathrm{d}t} ||u||_p^p&\leq& \frac{-4\ \delta\ (p-1)}{p}\ ||\nabla u^{\frac{p}{2}}||_2^2-(p-1)\int_{\Omega} \nabla\cdot{\bm{\omega}}\ u^p\ dx\nonumber\\
&+& r\ p\int_{\Omega} u^{p-1}\ E(u)\ dx.\nonumber
\end{eqnarray}
By Cauchy-Schwarz inequality we obtain
\begin{eqnarray}
\frac{\mathrm{d}}{\mathrm{d}t} ||u||_p^p&\leq& \frac{-4\ \delta\ (p-1)}{p}\ ||\nabla u^{\frac{p}{2}}||_2^2+(p-1)\ ||\nabla\cdot\bm{\omega}||_2\ ||u^\frac{p}{2}||_4^2\label{eq6.5}\\
&+& r\ p\ (1-a)\ |\Omega|.\nonumber
\end{eqnarray}
Using the Gagliardo-Nirenberg inequality \eqref{gag5} we have
\begin{equation}\label{eq5.5}
||u^{\frac{p}{2}}||_4\leq C\ ||u^{\frac{p}{2}}||_{W^{1,2}}^{\frac{1}{2}}\ ||u^{\frac{p}{2}}||_2^{\frac{1}{2}}.
\end{equation}
Substituting \eqref{eq5.5} in \eqref{eq6.5}, and by Young inequality we obtain
\begin{eqnarray}
\frac{\mathrm{d}}{\mathrm{d}t} ||u||_p^p&\leq& \frac{-4\ \delta\ (p-1)}{p}\ ||\nabla u^{\frac{p}{2}}||_2^2+C\ (p-1)\ ||\nabla\cdot\bm{\omega}||_2\ ||u^{\frac{p}{2}}||_{W^{1,2}}\ ||u^{\frac{p}{2}}||_2\nonumber\\
&+& r\ p\ (1-a)\ |\Omega|\nonumber\\
&\leq& \frac{-4\ \delta\ (p-1)}{p}\ ||\nabla u^{\frac{p}{2}}||_2^2+\frac{2\ \delta\ (p-1)}{p}\ ||u^{\frac{p}{2}}||_{W^{1,2}}^2\nonumber\\
&+&C(p)\ ||\nabla\cdot\bm{\omega}||_2^2\ \ ||u^{\frac{p}{2}}||^2_2+C(p)\nonumber\\
&\leq& \frac{-2\ \delta\ (p-1)}{p}\ ||\nabla u^{\frac{p}{2}}||_2^2+ C(p)\ ||u||_p^p+C(p)\ ||\nabla\cdot\bm{\omega}||_2^2\ \ ||u||^p_p+C(p).\nonumber
\end{eqnarray}
Next, integrating the above inequality in time, and using \eqref{fi5} yield that there exists $C_2(T,p)$ such that \eqref{eq3.5} and \eqref{eq4.5} hold.
\end{proof}

\begin{lemma}\label{le3.5}
Let the same assumptions as that of Theorem \ref{th1.5} hold, and $u$ be the nonnegative maximal solution of \eqref{bc5}. Then for all $T>0$ there exists $C_3(T)>0$, such that 
\begin{equation}\label{eq7.5}
||\nabla u(t)||_2\leq C_3(T)\ \ \mathrm{for\ all}\ t\in [0,T]\cap[0, T_{\mathrm{max}}).
\end{equation}
\end{lemma}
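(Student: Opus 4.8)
The plan is to run a parabolic energy estimate by testing the first equation in \eqref{bc5} against $-\Delta u$. Since $\partial_{\bm n}u=0$ on $\partial\Omega$, the boundary terms produced on integration by parts vanish and one obtains
\[
\frac{1}{2}\frac{\mathrm{d}}{\mathrm{d}t}\|\nabla u\|_2^2+\delta\,\|\Delta u\|_2^2=\int_\Omega \nabla\cdot(u\,\bm\omega)\,\Delta u\,\mathrm{d}x-r\int_\Omega u\,E(u)\,\Delta u\,\mathrm{d}x .
\]
The strategy is to bound the right-hand side by $\tfrac{\delta}{2}\|\Delta u\|_2^2$ (to be absorbed into the dissipation) plus terms of the form $C\,\|\nabla u\|_2^2+g(t)$ with $g\in L^1(0,T)$, so that, setting $y:=\|\nabla u\|_2^2$, one is left with $y'\le C\,y+g$ and the classical Gronwall lemma closes the estimate, using $y(0)=\|\nabla u_0\|_2^2<\infty$ (recall $W^{1,q}\hookrightarrow W^{1,2}$). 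All computations are legitimate for $t>0$, where $u(t)\in W^{2,q}(\Omega)$, and one passes to $t=0$ by continuity.

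The reaction term is harmless: $u\,E(u)=u(1-u)(u-a)$ is cubic, so $\|u\,E(u)\|_2^2\le C(1+\|u\|_6^6)$, which is bounded by Lemma \ref{le2.5} with $p=6$; a Young inequality then gives $r\int_\Omega u\,E(u)\,\Delta u\,\mathrm{d}x\le \tfrac{\delta}{4}\|\Delta u\|_2^2+C$. The real work is the transport term. Writing $\nabla\cdot(u\,\bm\omega)=\bm\omega\cdot\nabla u+u\,\nabla\cdot\bm\omega$ and using Cauchy--Schwarz and Young, it suffices to control $\|\bm\omega\cdot\nabla u\|_2^2$ and $\|u\,\nabla\cdot\bm\omega\|_2^2$. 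I expect the second of these to be the main obstacle, because at this stage neither $u$ nor $\bm\omega$ is yet known to be bounded, so a crude $L^\infty$ estimate is unavailable.

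To handle it I would combine three ingredients. First, elliptic regularity: Theorem \ref{reg5} with $p=2$ gives $\|\bm\omega\|_{W^{2,2}}\le \tfrac{K}{\varepsilon}\|\nabla E(u)\|_2\le \tfrac{C}{\varepsilon}(1+\|u\|_4)\,\|\nabla u\|_4$, while multiplying the $\bm\omega$-equation by $\bm\omega$ and using $\bm\omega\cdot\bm n=0$ to transfer the derivative off $\nabla E(u)$ (exactly as in the proof of Lemma \ref{le1.5}) bounds $\|\bm\omega\|_{W^{1,2}}$ by $\tfrac{C}{\varepsilon}\|E(u)\|_2$, hence by a constant thanks to \eqref{eq3.5}; in particular $\|\bm\omega\|_4\le C$. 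Second, the Gagliardo--Nirenberg inequality \eqref{gag5} in two dimensions gives $\|\nabla u\|_4^2\le C\,\|\nabla u\|_2(\|\Delta u\|_2+\|\nabla u\|_2+1)$ and $\|\nabla\cdot\bm\omega\|_4^2\le C\,\|\nabla\cdot\bm\omega\|_2(\|D^2\bm\omega\|_2+\|\nabla\cdot\bm\omega\|_2)$, where $\|D^2u\|_2\le C(\|\Delta u\|_2+\|u\|_2)$ by Neumann elliptic regularity. Chaining these, $\|u\,\nabla\cdot\bm\omega\|_2^2\le \|u\|_4^2\,\|\nabla\cdot\bm\omega\|_4^2$ is estimated, after several Young inequalities (the exponent of $\|\Delta u\|_2$ always staying strictly below $2$), by $\eta\,\|\Delta u\|_2^2+C\,\|\nabla u\|_2^2+C\,\|\nabla\cdot\bm\omega\|_2^2$, and likewise $\|\bm\omega\cdot\nabla u\|_2^2\le \|\bm\omega\|_4^2\,\|\nabla u\|_4^2\le \eta\,\|\Delta u\|_2^2+C\,\|\nabla u\|_2^2+C$. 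Third — and this is the decisive point — the only leftover that is neither absorbable into the dissipation nor proportional to $y$ is $C\,\|\nabla\cdot\bm\omega(t)\|_2^2$, and \eqref{fi5} says precisely that $t\mapsto\|\nabla\cdot\bm\omega(t)\|_2^2$ belongs to $L^1(0,T)$. Thus $g(t):=C\,\|\nabla\cdot\bm\omega(t)\|_2^2+C$ is the admissible $L^1$ forcing, and integrating $y'\le C\,y+g$ yields $\|\nabla u(t)\|_2\le C_3(T)$ for all $t\in[0,T]\cap[0,T_{\mathrm{max}})$, as claimed.
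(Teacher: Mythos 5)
Your proof is correct, but it closes the estimate by a genuinely different mechanism than the paper. Both arguments start identically (testing with $-\Delta u$, splitting $\nabla\cdot(u\,\bm\omega)=\bm\omega\cdot\nabla u+u\,\nabla\cdot\bm\omega$, and disposing of the cubic reaction term via Lemma \ref{le2.5}), but the paper then makes the \emph{coefficients} time-integrable: it observes that \eqref{dxu5} together with \eqref{eq4.5} for $p=4$ gives $\nabla E(u)\in L^2((0,t)\times\Omega)$, so that Theorem \ref{reg5} and the embedding $W^{2,2}\hookrightarrow W^{1,4}\hookrightarrow L^\infty$ yield $\|\bm\omega\|_\infty^2,\ \|\nabla\cdot\bm\omega\|_4^2\in L^1(0,T)$, and it concludes with a Gronwall inequality whose coefficient $\|\bm\omega(s)\|_\infty^2$ is merely integrable. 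You instead never invoke the dissipation estimate \eqref{eq4.5}: you interpolate pointwise in time, trading $\|\nabla u\|_4$ and $\|\nabla\cdot\bm\omega\|_4$ against the dissipation $\delta\|\Delta u\|_2^2$ via Gagliardo--Nirenberg, Neumann elliptic regularity $\|D^2u\|_2\le C(\|\Delta u\|_2+\|u\|_2)$, and Young (your powers of $\|\Delta u\|_2$ indeed stay strictly below $2$), so that only $C\|\nabla u\|_2^2$ plus the $L^1$ forcing $C\|\nabla\cdot\bm\omega\|_2^2$ from \eqref{fi5} survive, and a Gronwall inequality with \emph{constant} coefficient closes. Your route buys a more self-contained lemma (only \eqref{eq3.5} and \eqref{fi5} are needed, at the price of spending the parabolic dissipation here), whereas the paper's route additionally produces $\int_0^t\|\bm\omega\|_\infty^2\,ds\le C(T)$, a byproduct of independent use. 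One point deserves care in your write-up: your bound $\|\bm\omega\|_{W^{1,2}}\le \frac{C}{\varepsilon}\|E(u)\|_2$, hence $\|\bm\omega\|_4\le C(T)$, is valid, but not quite ``exactly as in Lemma \ref{le1.5}'' as stated there --- the displayed identity in that proof controls only $\|\nabla\cdot\bm\omega\|_2$, and $L^2$ bounds on $\bm\omega$ and its divergence alone do not control the full gradient (the curl part is missing, and $H(\mathrm{div})\not\hookrightarrow L^4$). What saves you is that the boundary computation actually performed there ($\partial_{\bm n}\bm\omega$ normal, $\bm\omega$ tangential, so $\partial_{\bm n}\bm\omega\cdot\bm\omega=0$ on $\partial\Omega$) produces the full Dirichlet energy $\varepsilon\|\nabla\bm\omega\|_2^2$ on the left-hand side, after which $\|\nabla\cdot\bm\omega\|_2\le\sqrt{2}\,\|\nabla\bm\omega\|_2$, the transfer of the derivative onto $\nabla\cdot\bm\omega$, and Young give the claimed $W^{1,2}$ bound; you should say this explicitly rather than cite the lemma's weaker displayed form.
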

\begin{proof}
We multiply the first equation in \eqref{bc5} by $-\Delta u$, integrate over $\Omega$, and use Cauchy-Schwarz, and Young inequalities and \eqref{eq3.5} to obtain
\begin{eqnarray}
\frac{1}{2}\frac{\mathrm{d}}{\mathrm{d}t}||\nabla u||_2^2&=&-\delta \ ||\Delta u||_2^2+\int_\Omega \left( \nabla u\cdot \bm{\omega}+\nabla\cdot \bm{\omega}\  u\right)\ \Delta u\ dx
- r\int_\Omega u\ (1-u)\ (u-a)\ \Delta u\ dx\nonumber\\
&\leq& -\delta\ ||\Delta u||_2^2+\frac{\delta}{2}\ ||\Delta u||_2^2+C\ \int_{\Omega} |\nabla u|^2\ |\bm{\omega}|^2\ dx\nonumber\\
&+&C\ \int_{\Omega} |\nabla\cdot \bm{\omega}|^2\ |u|^2\ dx+C\ r\ ||u\ (1-u)\ (u-a)||_2^2+\frac{\delta}{4}||\Delta u||_2^2\nonumber\\
&\leq& -\frac{\delta}{4}\ ||\Delta u||_2^2+C\ \int_{\Omega} |\nabla u|^2\ |\bm{\omega}|^2\ dx+C\ \int_{\Omega} |\nabla\cdot \bm{\omega}|^2\ |u|^2\ dx+C(T).\label{eq8.5}
\end{eqnarray}
To go further requires to improve the estimate on $\bm{\omega}$ and $\nabla \cdot \bm{\omega}$. For that purpose, we use Lemma \ref{le1.5} and Lemma \ref{le2.5} for $p=4$ to obtain for all $T>0$

\begin{equation}\label{eq9.5}
\int_{0}^t ||\nabla E(u)(s)||_2^2\ ds\leq (a+1)^2\int_{0}^t ||\nabla u(s)||_2^2\ ds+ 4\int_0^t ||u\ \nabla u(s)||_2^2\ ds\leq C_2(T),
\end{equation}
for all $t\in [0,T]\cap[0, T_{\mathrm{max}})$.
Consequently, $\nabla E(u)$ is bounded in $L^2\left( (0,t)\times \Omega\right)$. By Theorem \ref{reg5} and the continuous embedding of $W^{2,2}(\Omega)$  in $W^{1,4}(\Omega)$, and $W^{1,4}(\Omega)$ in $L^\infty(\Omega)$, we have
\begin{equation*}
||\bm{\omega}||_\infty+||\nabla \cdot\bm{\omega}||_4\leq C\ || \bm{\omega}||_{W^{1,4}} \leq C\ ||\bm{\omega}(s)||_{W^{2,2}}\leq C\ || \nabla E(u)||_2,
\end{equation*}
 which together with \eqref{eq9.5} implies that
\begin{equation}\label{eq10.5}\int_0^t \left(||\bm{\omega}(s)||^2_\infty+||\nabla \cdot\bm{\omega}(s)||^2_4\right)\ ds\leq C\ \int_0^t ||\nabla E(u)(s)||_2^2\ ds\leq C_2(T),\end{equation}
for all  $t\in [0,T]\cap[0, T_{\mathrm{max}})$. Using H\"older Inequality , \eqref{eq8.5} becomes
\begin{equation*}
\frac{1}{2}\frac{\mathrm{d}}{\mathrm{d}t}||\nabla u||_2^2\leq-\frac{\delta}{4}\ ||\Delta u||_2^2+C\ ||\bm{\omega}||_\infty^2 \int_{\Omega} |\nabla u|^2\ dx+C\  ||\nabla\cdot \bm{\omega}||_4^2\ ||u||_4^2+C(T).
\end{equation*}
Next we integrate the above inequality in time, and use \eqref{eq3.5} for $p=4$, and \eqref{eq10.5}  to obtain
$$||\nabla u(t)||_2^2\leq ||\nabla u_0||_2^2+C\ \int_0^t ||\bm{\omega}(s)||_\infty^2\ ||\nabla u(s)||_2^2\ ds+C(T),$$
using \eqref{eq10.5} again, we have thus proved \eqref{eq7.5}.
\end{proof}
\begin{lemma}\label{le4.5}
Let the same assumptions as that of Theorem \ref{th1.5} hold, and $u$ be the nonnegative maximal solution of \eqref{bc5}. Then for all $T>0$ there exists $C_4(T)>0$, such that 
\begin{equation}\label{eq11.5}
||\bm{\omega}(t)||_\infty\leq C_4(T), \ \mathrm{for\ all}\ t\in [0,T]\cap[0, T_{\mathrm{max}}).
\end{equation}
\end{lemma}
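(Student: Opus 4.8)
The plan is to obtain the $L^\infty$ bound on $\bm{\omega}(t)$ directly from the elliptic regularity estimate of Theorem \ref{reg5} applied to the second equation in \eqref{bc5}, combined with the pointwise-in-time control of $\nabla u$ furnished by Lemma \ref{le3.5}. Recall that $\bm{\omega}(t)$ solves the elliptic problem \eqref{ellip5} with right-hand side $f=\nabla E(u(t))=[(a+1)-2u(t)]\,\nabla u(t)$, so that Theorem \ref{reg5} gives
$$\|\bm{\omega}(t)\|_{W^{2,p}}\leq \frac{K(p)}{\varepsilon}\,\|\nabla E(u(t))\|_p$$
for any $p\in(1,\infty)$. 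Since we are in two space dimensions, the Sobolev embedding $W^{2,p}(\Omega)\hookrightarrow L^\infty(\Omega)$ holds as soon as $p>1$; it therefore suffices to bound $\|\nabla E(u(t))\|_p$ uniformly for $t\in[0,T]\cap[0,T_{\mathrm{max}})$ for one such exponent.

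First I would fix an exponent $p\in(1,2)$, for instance $p=3/2$, and estimate $\nabla E(u)$ by Hölder's inequality, splitting the product $[(a+1)-2u]\,\nabla u$ so that the gradient factor is measured in $L^2$. Writing $\tfrac1p=\tfrac1r+\tfrac12$ with $r=\tfrac{2p}{2-p}\geq 2$, this yields
$$\|\nabla E(u(t))\|_p\leq \big\|(a+1)-2u(t)\big\|_r\,\|\nabla u(t)\|_2\leq \big((a+1)\,|\Omega|^{1/r}+2\,\|u(t)\|_r\big)\,\|\nabla u(t)\|_2.$$
The factor $\|u(t)\|_r$ is controlled by $C_2(T,r)$ via Lemma \ref{le2.5} (since $r\geq 2$), and $\|\nabla u(t)\|_2$ is controlled by $C_3(T)$ via Lemma \ref{le3.5}. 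Hence $\|\nabla E(u(t))\|_p\leq C(T)$ uniformly in $t$.

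Combining these steps, Theorem \ref{reg5} gives $\|\bm{\omega}(t)\|_{W^{2,p}}\leq C(T)$, and the embedding $W^{2,p}(\Omega)\hookrightarrow L^\infty(\Omega)$ then produces the desired constant $C_4(T)$ in \eqref{eq11.5}. There is no genuine obstacle here: all the ingredients have already been assembled in the preceding lemmas, and the only point requiring care is the choice of the Hölder exponent $p$, which must lie in the window $(1,2)$ so that, simultaneously, the gradient can be paired with the available $L^2$ bound on $\nabla u$ and the resulting $W^{2,p}$ regularity is strong enough to embed into $L^\infty$ in dimension two.
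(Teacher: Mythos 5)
Your proposal is correct and follows essentially the same route as the paper: the paper also applies Theorem \ref{reg5} with the exponent $p=\tfrac{3}{2}$, estimates $\|[-2u+a+1]\,\nabla u\|_{3/2}\leq C(T)\,\|{-2u+a+1}\|_6\,\|\nabla u\|_2$ by H\"older using Lemmas \ref{le2.5} and \ref{le3.5}, and concludes via the embedding $W^{2,3/2}(\Omega)\hookrightarrow L^\infty(\Omega)$. Your only addition is making explicit the general window $p\in(1,2)$ and the conjugate exponent $r=\tfrac{2p}{2-p}$, which reduces to the paper's choice when $p=\tfrac{3}{2}$.
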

\begin{proof}
It follows from \eqref{eq3.5}, \eqref{eq7.5} and H\"older inequality,  that there exists $C(T)>0$ such that 
 \begin{equation*}||(-2u+a+1)\ \nabla u||_{\frac{3}{2}}\leq C(T)\ ||-2\ u+a+1||_6\ ||\nabla u||_2\leq C(T).\end{equation*} Consequently, Theorem \ref{reg5} ensures that 
\begin{equation*}
||\bm{\omega}||_{W^{2,\frac{3}{2}}}\leq C\ \left\lvert\left\lvert[-2\ u+a+1]\ \nabla u\right\rvert\right\rvert_{\frac{3}{2}} \leq C(T), \ \mathrm{for\ all}\ t\in [0,T]\cap[0, T_{\mathrm{max}}).
\end{equation*}
Using the continuous embedding of $W^{2,\frac{3}{2}}(\Omega)$ in $L^\infty(\Omega)$ we have thus proved \eqref{eq11.5}.
\end{proof}
Next, thanks to lemma A.1 in \cite{Boundedness} we can derive a uniform bound for $u$.
\begin{lemma}\label{le5.5}
Let the same assumptions as that of Theorem \ref{th1.5} hold, and $u$ be the nonnegative maximal solution of \eqref{bc5}. Then for all $T>0$ there exists $C_5(T)>0$, such that 
\begin{equation}\label{eq12.5}
||u(t)||_\infty\leq C_5(T), \ \mathrm{for\ all}\ t\in [0,T]\cap[0, T_{\mathrm{max}}).
\end{equation}
\end{lemma}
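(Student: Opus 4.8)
\noindent\textit{Proof proposal.} The plan is to extract the $L^\infty$ bound directly from the variation-of-constants formula \eqref{eq1.5}, writing
\[
u(t)=e^{t\delta\Delta}u_0-\int_0^t e^{(t-s)\delta\Delta}\,\nabla\!\cdot\!\big(u\,\bm\omega\big)(s)\,ds+\int_0^t e^{(t-s)\delta\Delta}\,\big[r\,u\,E(u)\big](s)\,ds,
\]
and estimating the three contributions separately with the smoothing properties of the Neumann heat semigroup; this is precisely the mechanism packaged in Lemma A.1 of \cite{Boundedness}. For the first term, since $q>2$ and $\Omega\subset\mathbb R^2$, the Sobolev embedding $W^{1,q}(\Omega)\hookrightarrow L^\infty(\Omega)$ places $u_0$ in $L^\infty(\Omega)$, and the contractivity of the semigroup on $L^\infty$ gives $\|e^{t\delta\Delta}u_0\|_\infty\le\|u_0\|_\infty$, uniformly in $t$.

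For the reaction term, $u\,E(u)=u\,(1-u)\,(u-a)$ is cubic in $u$, so \eqref{eq3.5} applied with a large exponent yields $\|u\,E(u)(s)\|_{q_0}\le C(T)$ for any fixed $q_0>1$. Combined with the $L^{q_0}\!\to\!L^\infty$ smoothing estimate $\|e^{\tau\delta\Delta}v\|_\infty\le C\,(\delta\tau)^{-1/q_0}\|v\|_{q_0}$, this makes $\int_0^t(\delta(t-s))^{-1/q_0}\|u\,E(u)(s)\|_{q_0}\,ds$ finite and bounded by $C(T)$, the kernel being integrable because $1/q_0<1$.

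The drift term is the decisive one, and I would handle it with the gradient smoothing estimate $\|e^{\tau\delta\Delta}\nabla\!\cdot\!\bm g\|_\infty\le C\,(\delta\tau)^{-\frac12-\frac1p}\|\bm g\|_p$ applied to $\bm g=u\,\bm\omega$. The $L^\infty$ bound \eqref{eq11.5} on $\bm\omega$ together with \eqref{eq3.5} gives $\|u\,\bm\omega(s)\|_p\le\|\bm\omega(s)\|_\infty\,\|u(s)\|_p\le C(T)$ for every finite $p$. The main (and essentially the only) obstacle is the integrability of the time singularity $(t-s)^{-\frac12-\frac1p}$, which holds exactly when $\tfrac12+\tfrac1p<1$, i.e. when $p>2$. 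This is where the earlier estimates are indispensable: because $\bm\omega\in L^\infty$ and $u\in L^p$ for all finite $p$, one may take $p>2$ (for instance $p=4$), whence $\int_0^t(\delta(t-s))^{-\frac12-\frac1p}\|u\,\bm\omega(s)\|_p\,ds\le C(T)$. Adding the three bounds gives $\|u(t)\|_\infty\le C_5(T)$ on $[0,T]\cap[0,T_{\max})$, which is \eqref{eq12.5}; the role of Lemma A.1 of \cite{Boundedness} is to supply these semigroup smoothing estimates and to assemble them into the stated uniform bound.
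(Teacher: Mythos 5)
Your proposal is correct in substance and rests on exactly the same inputs as the paper (the $L^p$ bounds \eqref{eq3.5} for every finite $p$ from Lemma \ref{le2.5} and the $L^\infty$ bound \eqref{eq11.5} on $\bm{\omega}$ from Lemma \ref{le4.5}), but it executes a genuinely different final step. The paper does not unpack any semigroup estimates: it rewrites the equation as $\partial_t u=\nabla\cdot(\delta\,\nabla u)+\nabla\cdot f+g$ with $f=-u\,\bm{\omega}$ and $g=r\,u\,E(u)$, checks the boundary compatibility $f\cdot \bm{n}=0$, verifies the integrability hypotheses ($\|u\|_{p_0}$ bounded with $p_0=9$, $\|f\|_{q_1}$ bounded with $q_1>4$, $\|g\|_{q_2}$ bounded with $q_2=3>2$), and then invokes Lemma A.1 of \cite{Boundedness} as a black box. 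Your version replaces that citation by a direct Duhamel computation from \eqref{eq1.5} with $L^p$--$L^\infty$ smoothing of the Neumann heat semigroup, which is legitimate here precisely because the diffusion is the constant-coefficient $\delta\,\Delta$; note, however, that Lemma A.1 of \cite{Boundedness} is proved by a Moser-type iteration, since it must handle quasilinear, variable diffusion for which no semigroup representation exists, so your aside that the lemma merely ``packages'' these semigroup estimates misattributes its content (the smoothing estimates you use come from the standard Neumann heat-semigroup toolkit, not from that lemma) --- though this does not affect the validity of your argument. Two points to tighten: the estimate $\|e^{\tau\delta\Delta}\nabla\cdot\bm{g}\|_\infty\leq C\,\tau^{-\frac12-\frac1p}\,\|\bm{g}\|_p$ for the Neumann semigroup is obtained by extension from vector fields with vanishing normal trace, so you should record, as the paper does, that $u\,\bm{\omega}\cdot\bm{n}=0$ on $\partial\Omega$; and on a bounded domain the pure-power smoothing bounds hold for $\tau$ in bounded intervals (harmless on $[0,T]$, but worth stating). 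Net effect: your route is self-contained and elementary given standard semigroup facts, while the paper's is shorter and delegates the whole parabolic regularity step to a ready-made lemma.
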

\begin{proof} We can see that the function $u$ solves
\begin{equation}
\label{eqdeu}
\left\{
\begin{array}{llll}
\displaystyle\partial_t u&=& \nabla\cdot(\delta\ \nabla u)+\nabla\cdot f+g,& \mathrm{a.e.\ in }\ [0,T_{\mathrm{max}})\times\Omega\\
\displaystyle u(0,x)&=&u_0(x),& \mathrm{a.e.\ in }\ \Omega\\
\displaystyle \partial_nu&=&0,& \mathrm{a.e.\ on }\ [0,T_{\mathrm{max}})\times\partial\Omega,
\end{array}
\right.
\end{equation}
where $f=-u\ \bm{\omega}$ and $g= r\ u\ E(u)$.\\

By the regularity \eqref{l1.5} of $u$, and by the continuous embeddings of $W^{1,p}(\Omega)$ in $C(\bar{\Omega})$ and of $W^{2,p}(\Omega)$ in $C(\bar{\Omega})$ for $p>2$ we obtain that $f=-u\ \bm{\omega} \in C\left( (0,T_{\mathrm{max}}); C(\bar{\Omega})\right)$ and $\nabla f \in C\left( (0,T_{\mathrm{max}}); C(\bar{\Omega})\right)$. Using \eqref{l1.5} and the continuous embeddings of $W^{1,p}(\Omega)$ in $C(\bar{\Omega})$ for $p>2$ again, we see that $g= r\ u\ E(u)\in C\left((0,T_{\mathrm{max}})\times\bar{\Omega}\right)$. On the other hand, $f\cdot n=-u\ \bm{\omega}\cdot n=0$ on $\partial \Omega\times (0,T_{\mathrm{max}})$.\\
 
 Thanks to \eqref{eq3.5}, we have 
 $$||u(t)||_{p_0}\leq C(T),\ \mathrm{for\ all}\ t\in [0,T]\cap[0, T_{\mathrm{max}}) $$
 where $p_0=9$, while \eqref{eq3.5} and \eqref{eq11.5} yield 
 \begin{equation*}
 ||f(t)||_{q_1}\leq C(T)\  \mathrm{and}\ \ ||g||_{q_2}\leq C(T),\ \mathrm{for\ all}\ t\in [0,T]\cap[0, T_{\mathrm{max}})
 \end{equation*}
where $q_1>4$ and $q_2=3>2$. Since $p_0>1-\frac{3q_1-4}{q_1-3}$ and $p_0=9>0$, we can apply lemma A.1 in \cite{Boundedness}, and the estimate \eqref{eq12.5} holds.
 \end{proof}
\begin{lemma}\label{le6.5}
Let the same assumptions as that of Theorem \ref{th1.5} hold, and $u$ be the nonnegative maximal solution of \eqref{bc5}. Then for all $T>0$ there exists $C_6(T)>0$, such that
\begin{equation}\label{eq20.5}
||\nabla u(t)||_q\leq C_6(T), \ \mathrm{for\ all}\ t\in [0,T]\cap[0, T_{\mathrm{max}}).
\end{equation} 
\end{lemma}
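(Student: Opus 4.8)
The plan is to return to the mild formulation \eqref{eq1.5}, differentiate it in space, and close a singular Gronwall inequality on the scalar function $t\mapsto \|\nabla u(t)\|_q$. Applying $\nabla$ to \eqref{eq1.5} gives
\[
\nabla u(t)=\nabla e^{t(\delta\Delta)}u_0-\int_0^t \nabla e^{(t-s)(\delta\Delta)}\,\nabla\cdot(u\,\bm{\omega})(s)\,ds+r\int_0^t \nabla e^{(t-s)(\delta\Delta)}\,\left[u\,E(u)\right](s)\,ds,
\]
and I would estimate the three contributions separately in $L^q(\Omega)$. For the initial-data term, the bound $\|\nabla e^{t(\delta\Delta)}u_0\|_q\leq C\,\|u_0\|_{W^{1,q}}$ is immediate from \eqref{eq2.5}. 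For the reaction term, Lemma \ref{le5.5} supplies a uniform $L^\infty$ bound on $u$ over $[0,T]\cap[0,T_{\mathrm{max}})$, so $u\,E(u)$ is bounded in $L^\infty(\Omega)\hookrightarrow L^q(\Omega)$; combining this with the smoothing estimate $\|\nabla e^{\tau(\delta\Delta)}v\|_q\leq C\,\delta^{-1/2}\tau^{-1/2}\|v\|_q$ of \eqref{eq2.5} and the integrability of $\tau^{-1/2}$ near $\tau=0$ bounds this term by $C(T)$.

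The delicate term is the aggregation term, and I expect it to be the main obstacle: differentiating the semigroup directly against $\nabla\cdot(u\,\bm{\omega})$ would place two spatial derivatives on the heat kernel and produce a non-integrable $(t-s)^{-1}$ singularity. To circumvent this I would expand $\nabla\cdot(u\,\bm{\omega})=\bm{\omega}\cdot\nabla u+u\,\nabla\cdot\bm{\omega}$, so that only the \emph{first-order} smoothing estimate above is needed. Lemma \ref{le4.5} controls $\|\bm{\omega}\|_\infty$ and Lemma \ref{le5.5} controls $\|u\|_\infty$, while Theorem \ref{reg5} applied to the second equation of \eqref{bc5}, whose right-hand side is $f=(-2u+a+1)\,\nabla u$, yields
\[
\|\nabla\cdot\bm{\omega}\|_q\leq \|\bm{\omega}\|_{W^{2,q}}\leq C\,\|(-2u+a+1)\,\nabla u\|_q\leq C(T)\,\|\nabla u\|_q,
\]
using once more the $L^\infty$ bound on $u$. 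Hence $\|\bm{\omega}\cdot\nabla u+u\,\nabla\cdot\bm{\omega}\|_q\leq C(T)\,\|\nabla u\|_q$, and this contribution is controlled by $C(T)\int_0^t (t-s)^{-1/2}\|\nabla u(s)\|_q\,ds$.

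Collecting the three estimates produces an inequality of the form
\[
\|\nabla u(t)\|_q\leq A+B\int_0^t (t-s)^{-1/2}\,\|\nabla u(s)\|_q\,ds,
\]
with constants $A=A(T)$ and $B=B(T)$. Since $u\in C\left([0,T_{\mathrm{max}}),W^{1,q}(\Omega)\right)$, the map $t\mapsto\|\nabla u(t)\|_q$ is continuous and therefore satisfies the local boundedness hypothesis \eqref{gr1.5} with $\beta=0$. Applying Lemma \ref{Gronwall} with $\alpha=\tfrac{1}{2}$ and $\beta=0$ then gives \eqref{eq20.5}. Once the divergence term is rewritten so that a single power $(t-s)^{-1/2}$ appears, the argument reduces entirely to the singular Gronwall lemma already at hand, and this $L^\infty(L^q)$ bound on $\nabla u$, together with \eqref{eq12.5}, yields the $W^{1,q}$ bound that rules out finite-time blow-up via Theorem \ref{local5}.
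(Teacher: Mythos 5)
Your proposal is correct and follows essentially the same route as the paper: estimate the mild formulation \eqref{eq1.5} in $L^q$, split $\nabla\cdot(u\,\bm{\omega})=\bm{\omega}\cdot\nabla u+u\,\nabla\cdot\bm{\omega}$ and use Theorem \ref{reg5} with the $L^\infty$ bounds of Lemmas \ref{le4.5} and \ref{le5.5} to get $\|\nabla\cdot\bm{\omega}\|_q\leq C(T)\,\|\nabla u\|_q$, then close with Lemma \ref{Gronwall} for $\alpha=\tfrac12$, $\beta=0$. The only (immaterial) difference is the reaction term, which you bound via the $(t-s)^{-1/2}$ smoothing estimate applied to $u\,E(u)\in L^\infty$, yielding a constant, while the paper instead keeps $\|\nabla(u\,E(u))\|_q\leq C(T)\,\|\nabla u\|_q$ as an extra nonsingular integral inside the Gronwall inequality.
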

\begin{proof}
Using \eqref{eq1.5},\eqref{eq2.5}, \eqref{eq12.5} and \eqref{eq11.5} we have for $q>2$
\begin{eqnarray}
||\nabla u(t)||_q&\leq& C\ ||u_0||_{W^{1,q}}+C_1\ \int_0^t (t-s)^{-\frac{1}{2}}\  ||\nabla\cdot(u\ \bm{\omega})(s)||_q\ ds\nonumber\\
&+&r\ C_1\ \int_0^t\ ||\nabla (u\ E(u))(s)||_q\ ds\nonumber\\
&\leq&C\ ||u_0||_{W^{1,q}}+C_1\ \int_0^t (t-s)^{-\frac{1}{2}}\ ||u(s)||_\infty\ ||\nabla\cdot \bm{\omega}(s)||_q\ ds\nonumber\\
&+&C_1\ \int_0^t (t-s)^{-\frac{1}{2}} ||\bm{\omega}(s)||_\infty\ ||\nabla u(s)||_q\ ds\label{eq14.5}\\
&+&r\ C_1\ \int_0^t ||(-3\ u^2+2\ (a+1)\ u-a)(s)||_\infty\ ||\nabla u(s)||_q\ ds.\nonumber\\
&\leq& C(T)\ \left( 1+\int_0^t(t-s)^{-\frac{1}{2}}(||\nabla\cdot \bm{\omega}(s)||_q+||\nabla u(s)||_q)\ ds+\int_0^t ||\nabla u(s)||_q\ ds\right).\nonumber
\end{eqnarray}
By Theorem \ref{reg5} we have
\begin{equation}\label{eq13.5}
||\nabla\cdot \bm{\omega}(s)||_q\leq ||\bm{\omega}(s)||_{W^{2,q}}\leq K(q)\ ||(-2u+a+1)(s)||_\infty\ ||\nabla u(s)||_q.
\end{equation}
Substituting \eqref{eq13.5} into \eqref{eq14.5} and using \eqref{eq12.5} we obtain
\begin{eqnarray*}
||\nabla u(t)||_q\leq C(T)\ \left( 1+\int_0^t(t-s)^{-\frac{1}{2}}\ ||\nabla u(s)||_q\ ds+\int_0^t ||\nabla u(s)||_q\ ds\right).
\end{eqnarray*}
Now, we obtain \eqref{eq20.5} by applying Lemma  \ref{Gronwall} with $\beta=0$,  and $\alpha=\frac{1}{2}$.
\end{proof} 
Thanks to these lemmas we can now prove the main Theorem \ref{th1.5}
\begin{proof}[Proof of Theorem \ref{th1.5}]
For all $T>0$, Lemma \ref{le2.5} and Lemma \ref{le5.5} ensure that for $q>2$
\begin{equation*}
||u(t)||_{W^{1,q}}\leq C(T),  \ \mathrm{for\ all}\ t\in [0,T]\cap[0, T_{\mathrm{max}})
\end{equation*}
which guarantees that $u$ cannot explode in $W^{1,q}(\Omega)$ in finite time and thus\\ that $T_{\mathrm{max}}=\infty$.
\end{proof}
\subsection{Monostable case}
For this choice of $E$, system \eqref{grin5}-\eqref{in2.5} now reads
\begin{equation}
\label{mc5}
\left\{
\begin{array}{llll}
\displaystyle \partial_t u&=& \delta\ \Delta u-\nabla\cdot(u\ \bm{\omega})+r\ u\ (1-u),& x\in \Omega, t>0 \\
\displaystyle -\varepsilon\ \Delta \bm{\omega}+\bm{\omega}&=& -\ \nabla u,& x\in \Omega, t>0 \\
\displaystyle \partial_{\bm{n}}u=0&,& \bm{\omega}\cdot\bm{n}=\partial_{\bm{n}} \bm{\omega}\times \bm{n}=0,& x\in \partial \Omega, t>0\\
\displaystyle u(0,x)&=&u_0(x),& x\in \Omega,
\end{array}
\right.
\end{equation}
when $u_0\in W^{1,q}(\Omega)$ for some $q>2$. 
Since $E\in C^2(\mathbb{R})$, Theorem \ref{local5} ensures that there is a maximal solution of \eqref{mc5} in $C\left( [0, T_{\mathrm{max}}), W^{1,q}(\Omega)\right)\cap C\left( (0, T_{\mathrm{max}}), W^{2,q}(\Omega)\right) $  for $q>2$.\\

To prove Theorem \ref{th2.5} we need to prove the following lemma
\begin{lemma}\label{le7.5}
Let the same assumptions as that of Theorem \ref{th2.5} hold, and let  $u$ be the maximal solution of \eqref{mc5}. Then for all $T>0$, there exists $C_7(T)>0$ such that

\begin{equation}\label{fiw5}
\int_0^t \left(||\bm{\omega}(s)||^2_{2}+||\nabla \cdot \bm{\omega}(s)||_2^2\right)\ ds\leq C_7(T),\ \ \mathrm{for\ all}\ t\in [0,T]\cap[0, T_{\mathrm{max}}).
\end{equation}
\end{lemma}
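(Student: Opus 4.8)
The plan follows the same cancellation scheme as in Lemma \ref{le1.5}, the only change being the functional tested against the first equation. Since now $E(u)=1-u$, the source term of the elliptic equation is $-\nabla u$, so multiplying the first equation of \eqref{mc5} by $2u$ would produce a coupling term $2\int_\Omega u\,\bm{\omega}\cdot\nabla u\,dx$ which no longer matches the term coming from the $\bm{\omega}$-equation. To recover an \emph{exact} cancellation I would instead test the first equation with $1+\log u$, i.e. work with the Boltzmann entropy $\int_\Omega u\log u\,dx$: its convexity factor $\phi''(u)=1/u$ exactly compensates the weight $u$ in the drift $\nabla\cdot(u\,\bm{\omega})$. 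This is precisely why the expected starting estimate is of $L\log L$ type rather than $L^2$.

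\textbf{Main computation.} First I would multiply the first equation of \eqref{mc5} by $1+\log u$ and integrate by parts: the diffusion term yields the nonnegative contribution $\delta\int_\Omega |\nabla u|^2/u\,dx$; the drift term becomes $\int_\Omega \bm{\omega}\cdot\nabla u\,dx$ because $\nabla(1+\log u)\cdot(u\,\bm{\omega})=\bm{\omega}\cdot\nabla u$; and the reaction term gives $r\int_\Omega (1+\log u)\,u(1-u)\,dx$, so that
\begin{equation*}
\frac{\mathrm{d}}{\mathrm{d}t}\int_\Omega u\log u\,dx + \delta\int_\Omega \frac{|\nabla u|^2}{u}\,dx = \int_\Omega \bm{\omega}\cdot\nabla u\,dx + r\int_\Omega (1+\log u)\,u(1-u)\,dx.
\end{equation*}
Next, exactly as in \eqref{phi5}, I would multiply the elliptic equation $-\varepsilon\Delta\bm{\omega}+\bm{\omega}=-\nabla u$ by $\bm{\omega}$ and integrate, the boundary conditions \eqref{in2.5} killing the boundary term, which gives
\begin{equation*}
\varepsilon\int_\Omega |\nabla\cdot\bm{\omega}|^2\,dx + \int_\Omega |\bm{\omega}|^2\,dx = -\int_\Omega \bm{\omega}\cdot\nabla u\,dx.
\end{equation*}
Adding the two identities makes the coupling terms $\pm\int_\Omega\bm{\omega}\cdot\nabla u\,dx$ cancel, leaving
\begin{equation*}
\frac{\mathrm{d}}{\mathrm{d}t}\int_\Omega u\log u\,dx + \delta\int_\Omega \frac{|\nabla u|^2}{u}\,dx + \varepsilon\,||\nabla\cdot\bm{\omega}||_2^2 + ||\bm{\omega}||_2^2 = r\int_\Omega (1+\log u)\,u(1-u)\,dx.
\end{equation*}

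\textbf{Closing the estimate.} The remaining point is that the reaction integral is bounded above: the map $u\mapsto (1+\log u)\,u(1-u)$ is continuous on $(0,\infty)$, tends to $0$ as $u\to 0^+$ and to $-\infty$ as $u\to\infty$, hence $M:=\sup_{u\geq 0}(1+\log u)\,u(1-u)<\infty$ and the right-hand side is at most $rM|\Omega|$. Dropping the nonnegative diffusion term and integrating in time gives
\begin{equation*}
\int_\Omega u(t)\log u(t)\,dx + \int_0^t\left(\varepsilon\,||\nabla\cdot\bm{\omega}(s)||_2^2+||\bm{\omega}(s)||_2^2\right)ds \leq \int_\Omega u_0\log u_0\,dx + rM|\Omega|\,t.
\end{equation*}
Since $s\log s\geq -1/e$ for $s\geq 0$, the entropy at time $t$ is bounded below by $-|\Omega|/e$, and since $u_0\in W^{1,q}(\Omega)\hookrightarrow L^\infty(\Omega)$ (as $q>2$ and $\Omega\subset\mathbb{R}^2$) the initial entropy $\int_\Omega u_0\log u_0\,dx$ is finite. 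Setting $c=\min\{\varepsilon,1\}>0$ and absorbing all constants into $C_7(T)$ then yields \eqref{fiw5}.

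\textbf{Main obstacle.} The delicate point is the use of $\log u$ as a test function, which is only formal where $u$ vanishes. I would make it rigorous by replacing $u$ with $u+\eta$, $\eta>0$, in the entropy, running the identical computation, and letting $\eta\to 0$ via the uniform $L\log L$ lower bound together with monotone/dominated convergence. Apart from this regularization, the cancellation and the sign control of the reaction term are structurally identical to Lemma \ref{le1.5}, so I expect no further difficulty, and the subsequent estimates (the $L^\infty(L^p)$ bound, the $L^\infty$ bound on $\bm{\omega}$, and the $L^\infty(L^q)$ bound on $\nabla u$) would then proceed verbatim as in the bistable case.
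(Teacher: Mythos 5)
Your proposal is correct and follows essentially the same route as the paper: testing the first equation with $1+\log u$, testing the elliptic equation with $\bm{\omega}$, cancelling the coupling terms $\pm\int_\Omega \bm{\omega}\cdot\nabla u\,dx$, and bounding the reaction term (the paper uses the slightly more direct split $u(1-u)\log u\leq 0$ and $u(1-u)\leq 1$ in place of your supremum $M$, which is equivalent). Your added remarks on the lower bound $s\log s\geq -1/e$, the finiteness of the initial entropy, and the regularization $u+\eta$ near the vacuum set are legitimate refinements of steps the paper leaves implicit, not a different approach.
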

\begin{proof}
The proof goes as follows. On the one hand, we multiply the first equation in \eqref{mc5} by $(\log u+1)$ and integrate it over $\Omega$, the boundary terms vanish. Since $u\ (1-u)\ \log u\leq 0$ and $u\ (1-u)\leq 1$,
\begin{eqnarray}
\frac{\mathrm{d}}{\mathrm{d}t}\int_{\Omega} u\ \log u\ dx&=& -\int_\Omega (\delta\ \nabla u-u\ \bm{\omega})\cdot (\frac{1}{u}\ \nabla u)\ dx+r\ \int_\Omega u\ (1-u)\ (\log u+1)\ dx\nonumber\\
&\leq& -\int_\Omega \frac{\delta}{u}\ |\nabla u|^2\ dx+\int_\Omega \bm{\omega}\cdot \nabla u\ dx+|\Omega|\ r.\label{uu5}
\end{eqnarray}
On the other hand, we multiply the second equation in \eqref{mc5} by $\bm{\omega}$, integrate it over $\Omega$. We note that the boundary conditions for $\bm{\omega}$ guarantee that $\bm{\omega}$ is tangent to $\partial\Omega$ while $\partial_{\bm{n}}\bm{\omega}$ is normal to $\partial\Omega$. Consequently, $\partial_{\bm{n}} \bm{\omega}\cdot \bm{\omega}=0$ on $\partial\Omega$ and it follows from an integration by parts that
\begin{equation}\label{fi25}\varepsilon \int_\Omega |\nabla\cdot \bm{\omega}|^2\ dx+\int_\Omega |\bm{\omega}|^2\ dx=-\int_\Omega \bm{\omega}\cdot\nabla u\ \ dx.\end{equation}
Adding \eqref{fi25} and \eqref{uu5} yields
\begin{equation}\label{hb5}
\frac{\mathrm{d}}{\mathrm{d}t}\int_\Omega u\ \log u\ dx+\varepsilon\ ||\nabla\cdot \bm{\omega}||_2^2+||\bm{\omega}||_2^2\leq -4\ \delta\int_\Omega |\nabla \sqrt{u}|^2\ dx+|\Omega|\ r.
\end{equation}
Finally, \eqref{fiw5} is obtained by a time integration of \eqref{hb5}.
\end{proof}
\begin{proof}[Proof of Theorem \ref{th2.5}]
Thanks to \eqref{fiw5}, we now argue as in the proof of Lemma \ref{le2.5}, Lemma \ref{le3.5}, Lemma \ref{le4.5}, Lemma \ref{le5.5} and Lemma \ref{le6.5} to get that for $q>2$
\begin{equation*}
||u(t)||_\infty+||\nabla u(t)||_q\leq C(T),\ \ \mathrm{for\ all}\ t\in [0,T]\cap[0, T_{\mathrm{max}}).
\end{equation*}
Thus, the maximal solution $u$ of \eqref{mc5} cannot explode in finite time.
\end{proof}
\section*{Acknowledgment}
I thank Philippe Lauren\c cot for fruitful discussion.

\end{document}